\theoremstyle{plain}
\newtheorem{lemma}{Lemma}[section]
\newtheorem*{theorem*}{Theorem}
\newtheorem*{lemma*}{Lemma}
\newtheorem*{proposition*}{Proposition}
\newtheorem*{corollary*}{Corollary}
\newtheorem*{problem*}{Problem}
\newtheorem{theorem}[lemma]{Theorem}
\newtheorem{proposition}[lemma]{Proposition}
\theoremstyle{definition}
\newtheorem{remark}[lemma]{Remark}
\newtheorem{observation}[lemma]{Observation}
\newcommand{\R}{\mathbb{R}}
\newcommand{\Q}{\mathbb{Q}}
\newcommand{\Z}{\mathbb{Z} }
\newcommand{\C}{\mathbb{C}}
\renewcommand{\P}{\mathbb{P}}
\newcommand{\A}{\mathcal{A}}
\newcommand{\Pic}{\textrm{Pic}}
\newcommand{\PGL}{\textrm{PGL}}
\begin{document}

\title{The icosahedral line configuration and Waldschmidt
constants
}
\author{Sebastian Calvo}
\address{Department of Mathematics, The Pennsylvania State University, University Park, PA 16802}
\email{calvos1@psu.edu}

\date{\today} 
\thanks{During the preparation of this article the author was partially supported by the NSF FRG grant DMS-1664303 and NSF DMS-2142966.}
\maketitle

\begin{abstract}
There is a highly special point configuration in $\mathbb{P}^2$ of 31 points, naturally arising from the geometry of the icosahedron. The 15 planes of symmetry of the icosahedron projectivize to 15 lines in $\mathbb{P}^2$, whose points of intersections yield the 31 points. Each point corresponds to an opposite pair of vertices, faces or edges of the icosahedron. The symmetry group of the icosahedron is  $G=A_5\times \mathbb{Z}_2$, one of finitely many exceptional complex reflection groups. The action of $G$ on the icosahedron descends onto an action on the line configuration. We blow up $\mathbb{P}^2$ at the 31 points to study the line configuration.  The Waldschmidt constant is 
a measure of how special a collection of points in $\P^2$. In this paper, we study negative $G$-invariant curves on this blow-up in order to compute the Waldschmidt constant of the ideal of the $31$ singularities. 
\end{abstract}

\section{Introduction}

 In this paper, we study a line configuration $\A$ of $15$ lines and $31$ points in $\P^2=\P^2_\C$ that naturally arise from studying the symmetries of the icosahedron, the platonic solid studied heavily by Klein \cite{Klein}. The icosahedron has 15 mirror planes that projectivize to the 15 lines of $\A$. The 15 lines intersect at 6 quintuple points, 10 triple points and 15 double points. Each pair of opposite vertices, faces and edges correspond to a quintuple, triple and double point respectively. 

We may consider $I\subseteq S=\C[x,y,z]$ to be the ideal of a reduced collection of points in $\P^2$.  Define the $m$\textit{-th symbolic power} $I^{(m)}$ to be $\cap_i I_{p_i}^m$ \cite[pg 6]{BrianAsymptotics}. Geometrically, this ideal corresponds to curves having multiplicity at least $m$ at each $p_i$. The \textit{Waldschmidt constant} of an ideal $I$ is defined to be
\[ \widehat{\alpha}(I)=\lim_{m\rightarrow \infty} \frac{\alpha(I^{(m)})}{m} \]
where $\alpha(I)$ is the least positive integer $t$ such that the graded piece $I_t \ne 0.$ In this paper, we compute the Waldschmidt constant for $I_\A$, the homogeneous ideal of singularities of $\A$. The singularities are highly non-general points of $\P^2$. We demonstrate similar techniques to those in \cite{NCSB} in order to calculate $\widehat{\alpha}(I_\A)$. The following is the main result of the paper.

\begin{theorem*}[Main Result]\label{theorem-main} Let $I_\mathcal{A}$ be the homogeneous ideal of singularities of $\A$. Then \[ \widehat{\alpha}(I_\mathcal{A}) = \frac{11}{2}. \] \end{theorem*}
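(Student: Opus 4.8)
The plan is to move to the blow-up $\pi\colon X\to\P^2$ at the $31$ points and realize $\widehat\alpha(I_\A)$ as the value of $t$ at which the ray $C_t:=tH-(\text{sum of exceptional divisors})$ enters the pseudo-effective cone, in the spirit of \cite{NCSB}. Write $H=\pi^*\mathcal O_{\P^2}(1)$ and let $E_1,\dots,E_{31}$ be the exceptional divisors, grouped by $G$-orbit into the $6$ quintuple, $10$ triple and $15$ double points; set $\xi_q=\sum E_i$ over the quintuple points and, likewise, $\xi_t$ and $\xi_d$. Since $\alpha(I_\A^{(m)})=\min\{d\colon dH-m(\xi_q+\xi_t+\xi_d)\in\mathrm{Eff}(X)\}$, one gets
\[
\widehat\alpha(I_\A)=\min\{t\in\R\colon C_t\in\overline{\mathrm{Eff}}(X)\},\qquad C_t:=tH-\xi_q-\xi_t-\xi_d .
\]
The first step is the intersection theory of the strict transforms $\widetilde\ell$ of the $15$ lines. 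Any two lines of $\A$ meet in one of the $31$ points; since $G$ acts transitively on the lines and $6\cdot5=10\cdot3=15\cdot2$, each line passes through exactly two quintuple, two triple and two double points, so $\widetilde\ell$ is a smooth rational curve with $\widetilde\ell^{\,2}=1-6=-5$, and the $15$ curves $\widetilde\ell$ are pairwise disjoint. Their sum $D:=\sum_\ell\widetilde\ell=15H-5\xi_q-3\xi_t-2\xi_d$ is a $G$-invariant effective class with $D^2=-75$ and $D\cdot\widetilde\ell_0=-5$.

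Next I would compute the Zariski decomposition of $C_t$ near the threshold. Since $C_t\cdot\widetilde\ell_0=t-6<0$ for $t<6$, the negative part is a multiple $aD$ of the $15$ disjoint $(-5)$-curves (equality of the coefficients follows from $G$-equivariance of Zariski decomposition), and $P_t\cdot\widetilde\ell_0=0$ forces $a=(6-t)/5$, giving positive part
\[
P_t=(4t-18)H-(t-5)\,\xi_q-\tfrac{3t-13}{5}\,\xi_t-\tfrac{2t-7}{5}\,\xi_d .
\]
A direct computation gives $P_t^{\,2}=4t^2-36t+77$, which vanishes at $t=\tfrac{11}2$ (and at the spurious value $t=\tfrac72$, which lies below the range in which the displayed decomposition is valid). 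At $t=\tfrac{11}2$ one has $a=\tfrac1{10}$ and
\[
P:=P_{11/2}=4H-\tfrac12\,\xi_q-\tfrac7{10}\,\xi_t-\tfrac45\,\xi_d,\qquad P^2=0,\quad P\cdot\widetilde\ell=0,\quad P\cdot E_i>0,
\]
together with $P\cdot C_t=4t-22$. Granting that $P$ is nef, the theorem follows at once: for $t<\tfrac{11}2$ the class $C_t$ pairs negatively with the nef class $P$, so $C_t\notin\overline{\mathrm{Eff}}(X)$ and $\widehat\alpha(I_\A)\ge\tfrac{11}2$; and since nef classes are pseudo-effective, $C_{11/2}=P+\tfrac1{10}D$ is a sum of a pseudo-effective and an effective class, so $\widehat\alpha(I_\A)\le\tfrac{11}2$.

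The whole argument thus reduces to the nef-ness of $P$, and this is the step I expect to be the main obstacle. Because $P$ is $G$-invariant it suffices to verify $P\cdot\Gamma\ge0$ for one irreducible curve $\Gamma$ in each $G$-orbit, and (via orbit sums) this is equivalent to determining the four-dimensional cone $\overline{\mathrm{Eff}}(X)^G\subset\Pic(X)^G\otimes\R$ and checking that $P$ lies in its dual. Its extremal rays include $\xi_q$, $\xi_t$, $\xi_d$, the ray of $D$, and the orbit sum $12H-5\xi_q$ of the six $(-1)$-curves that are the strict transforms of the conics through five of the six quintuple points (no three of which are collinear, since each line carries only two quintuple points). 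One should expect a few further orbits of low-degree negative curves — conics and cubics through clusters of the singular points — and the crux is to prove that the resulting list is complete: this means bounding the multiplicities of an arbitrary irreducible curve at the six vertex-, ten face- and fifteen edge-points by Bézout against the curves already found and against the $G$-invariant conic $F_2$ of the icosahedral group. Once the finitely many extremal orbits are in hand, checking $P\cdot\Gamma\ge0$ on each is routine (for instance $P\cdot D=0$, $P\cdot\xi_q=3$, $P\cdot\xi_t=7$, $P\cdot\xi_d=12$, $P\cdot(12H-5\xi_q)=33$), and the proof is complete.
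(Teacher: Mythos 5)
Your reduction is sound and your candidate divisor is exactly the one the paper uses: your $P=4H-\tfrac12 E_5-\tfrac{7}{10}E_3-\tfrac45 E_2$ is $\tfrac{1}{10}$ of the paper's $D=40H-5E_5-7E_3-8E_2$, your intersection numbers all check out, and the lower-bound mechanism (a nef class pairing negatively with $\beta H-E_5-E_3-E_2$ for $\beta<\tfrac{11}{2}$) is the paper's Lemma 3.2 verbatim. But there is a genuine gap exactly where you flag ``the main obstacle'': the nefness of $P$ is never established, and in your version \emph{both} bounds depend on it (the paper's upper bound is, by contrast, unconditional: $\chi(kD+2H)=6+30k>0$ forces $kD+2H+kA$ to be effective, giving elements of $I_\A^{(10k)}$ in degree $55k+2$). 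Your proposed route to nefness --- determine all extremal rays of the $G$-invariant pseudo-effective cone and check $P$ against each --- hinges on a completeness statement for your list of negative orbits that you do not prove and that the paper never needs; your partial list ($E_5$, $E_3$, $E_2$, the lines, the six conics through five quintuple points) is in any case not enough to pin down the dual cone.

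The paper's actual mechanism is more economical: it writes $6D=4A+5B+5C$ with $A=15H-5E_5-3E_3-2E_2$ (the fifteen lines), $B=6H-2E_2$ (the invariant sextic $\phi_6=0$, a union of six lines), and $C=30H-2E_5-6E_3-6E_2$, each effective and $G$-irreducible with $D\cdot A=D\cdot B=D\cdot C=0$. Since an effective class can meet an irreducible curve negatively only if that curve is one of its components, and every $G$-irreducible component of $D$ meets $D$ in zero, $D$ is nef --- no classification of negative curves is required. The ingredient entirely missing from your write-up is the class $C$: one must exhibit a $G$-irreducible degree-$30$ curve with multiplicity $2$ at the quintuple points and $6$ at the triple and double points. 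Producing it is the technical heart of the paper (the invariant $\psi_{30}=2\psi_{10}^3+\psi_2^2\psi_6\psi_{10}^2+2\psi_2\psi_6^2\psi_6'\psi_{10}+\psi_6^3\psi_6'^2$, obtained by imposing the vanishing conditions on a four-parameter family of products of icosahedral invariants and checking irreducibility in the quotient $\P(2,6,10)$). Without this curve, or an equivalent completeness result for the negative cone, the nefness of $P$ --- and hence the theorem --- is not proved.
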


Determining the Waldschmidt constant of a collection of points is difficult in general. The Nagata conjecture \cite{Nagata} states that for $s\ge 10$ very general points in $\P^2$, a curve $C$ passing through each $p_i$ with multiplicity at least $m_i$ satisfies 
\[ \deg C \geq \frac{1}{\sqrt{s}}\sum_{i=1}^s m_i. \]
\noindent The formulation of Nagata's conjecture in terms of Waldschmidt constants states $\widehat{\alpha}(I) = \sqrt{s}.$ From this perspective, the Waldschmidt constant measures how special an arrangement of points in $\P^2$ is.  The Waldschmidt constant can be regarded as the reciprocal of a multi-point Seshadri constant, a well-known measure of local positivity of a line bundle. It is always true that $1\le \widehat{\alpha}(I) \le \sqrt{s}$. For special configurations of points, such calculations of $\widehat{\alpha}(I)$ are non-trivial but we expect  $\widehat{\alpha}(I)$ to be smaller than $\sqrt{s}$. We exploit a relationship between the 31 points and the group $A_5\times \Z_2$ to alleviate the difficulty.

\subsection{ Emphasis of the group action on the point configuration } 

The $60$ rotational symmetries of the icosahedron form a group isomorphic to $A_5$. The full symmetry group  of the icosahedron $A_5\times \Z_2 $ has order $120$, obtained by introducing the reflection through the center of the icosahedron. This group is one of finitely many exceptional complex reflection groups.
For a $(n+1)$-dimensional complex vector space $V$, a \textit{complex reflection group} $G\subseteq \textrm{GL}(V)$ is a finite group that is generated by pseudoreflections: those elements that fix a hyperplane pointwise. These groups were studied and classified in \cite{shephard_todd_1954}. Considering the collection of hyperplanes, given by pseudoreflections of $G$, and projectivizing $V$ yields a hyperplane configuration in $\P^n$. In addition, the natural action of $G$ on $\P^n$ permutes the hyperplanes. Configurations admitted by complex reflection groups exhibit interesting characteristics \cite{drabkin2021singular}. For $n=2$, we obtain a line configuration in $\P^2$. Line configurations have been studied extensively \cite{BrianAsymptotics,orlik2013arrangements} yet continue to be subject of recent research \cite{NCSB,Dolgachev}. 

The group $G$ then naturally acts on the homogeneous coordinate ring $S$ of $\P^2$. An important characteristic of comflex reflection groups is that the invariant ring of such groups is finitely generated \cite{derksen2015computational}. Let $G=A_5\times \Z_2$ The invariant ring $S^G$ is generated as a $\C$-algebra by homogeneous forms $\phi_2,\phi_6,\phi_{10}$ where the subscript denotes the degree of the form. The polynomials $\phi_d$ turn out to be vital in considering curves vanishing to points of the point configuration. This will allow us to construct curves of  high multiplicities at the singularities of $\A$ in order to bound $\widehat{\alpha}(I_\A)$ from above.
 
\subsection{The blow-up of $\P^2$ at the 31 singularities. }

To give a lower bound of $\widehat{\alpha}(I_\A)$, we consider the Picard group of the blow-up $X_\A$ of $\P^2$ at the $31$ singularities and we show certain divisor classes are not effective. This is equivalent to showing certain divisor classes are nef. In addition, the action of $G$ on $\P^2$ extends to Pic$(X_\A)$. We will see that the divisor class $40H-5E_5-7E_3-8E_2$ is $G$-invariant and nef, where $H$ is the divisor class of a line and $E_k$ is the sum of exceptional divisors over the points of multiplicity $k$ of $\A$.

%\subsection{Waldschmidt constants corresponding to sub-configurations.}

\subsection{Outline} 

In section \ref{s-prelims}, we review group theoretic facts and some representation theory of $G$. We construct the corresponding line configuration $\mathcal{A}$ while emphasizing its geometry with respect to the group action on $\P^2$. In Section \ref{s-wald}, we exhibit an upper bound for $\widehat{\alpha}(I_\A)$. We assume nefness of $D$ to give a lower bound.  In Section \ref{s-meat}, we focus on studying $G$-invariant curves on $X_\A$ in order to prove $D$ is nef. Lastly, in Section \ref{section-sub-point} we compute the Waldschmidt constants of sub point-configurations naturally found in the configuration $\A$. 

\section{Preliminaries}\label{s-prelims}

\subsection{The icosahedron}\label{ss-ico}

We return to the icosahedron. There are $15$ mirror planes of the icosahedron in $\R^3$. On the icosahedron, five planes meet at each vertex, three planes meet at the center of each face and two planes meet at the mid-point of each edge. Figure \ref{fig-markedico-lineconfig} displays the marked icosahedron alongside the line configuration $\A$ in $\P^2$. To obtain $\A$, fix a pair of opposite vertices $\{p,-p\}$ and identify the plane $P$ wedged between the pair of vertices that slices the icosahedron in half.  Project the mirror planes onto the plane containing $p$ parallel to $P$. These are the lines of $\A$ and they meet at $6$ quintuple points, $10$ triple points and $15$ double points. We say that $\A$ is the line configuration corresponding to the group $G.$ 

Alternatively, one may construct $\A$ in $\R^2$ as follows. Fix a regular pentagon $R$. Extend the edges of $R$ to lines. Draw the diagonals of $R$ and line segments between the center of $R$ and each vertex\footnote{Instead of connecting the center to a vertex of $R$, one may also draw a line segment from the center to each midpoint of a side of $R$. This is known to be called an \textit{apothem}. }. Extend these to lines.  These are the $15$ lines of $\A$. In the following subsection, we construct the line configuration algebraically. 

\begin{figure}
    \centering
    \includegraphics[width=8cm]{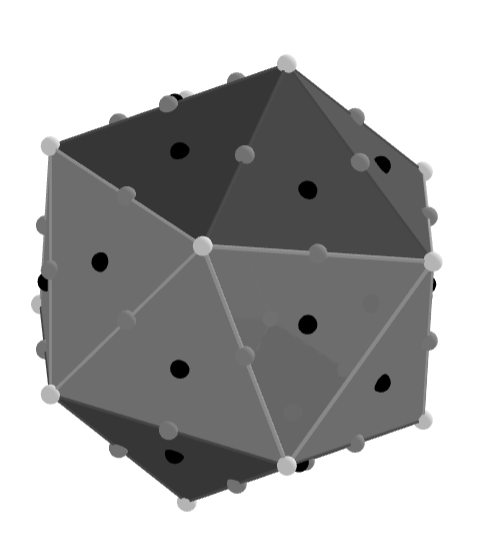}
    \includegraphics[width=8cm]{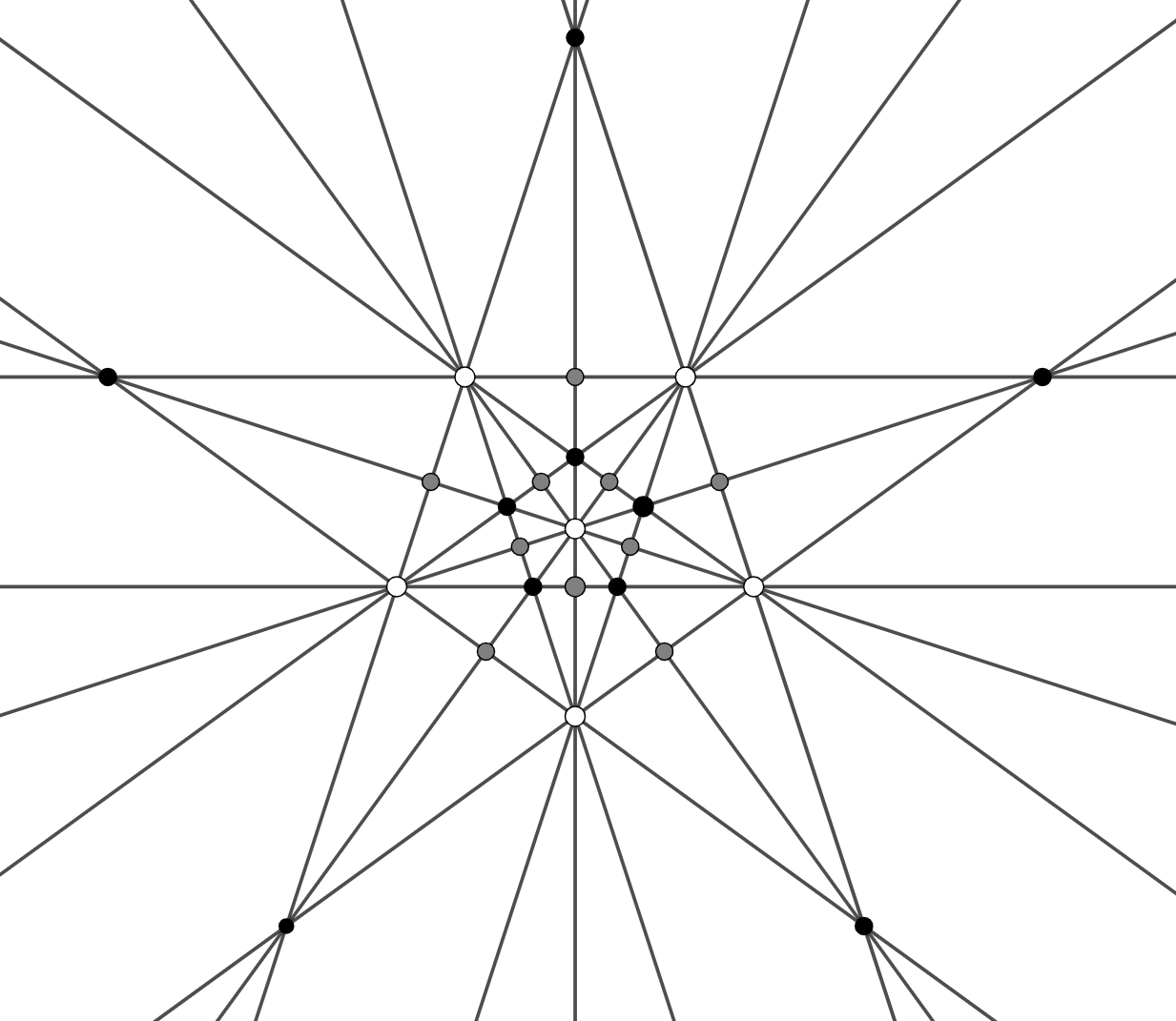}
    \caption{An icosahedron marked with quintuple (white), triple (black) and double (grey) points alongside the line configuration in the affine patch $y-\omega z\ne 0$.  Note that $5$ double points lie on the line at infinity.}
    \label{fig-markedico-lineconfig}
\end{figure}

\subsection{The group $A_5\times \Z_2$} The group $G$ is generated by idempotent elements $g,h,i$. Let $\omega$ be the golden ratio. There is a 3-dimensional representation $\rho$ given by 
    \[  \rho(g) = \begin{pmatrix} -1 & 0 & 0 \\ 0 & 1 & 0 \\ 0 & 0 & 1 \end{pmatrix}, \quad    \rho(h) = \begin{pmatrix} 1 & 0 & 0  \\ 0 & -1 & 0 \\ 0 & 0 & 1 \end{pmatrix},  \quad  \rho(i) = \frac{-1}{2}\begin{pmatrix} \omega-1 & \omega & 1  \\ \omega & -1 & \omega-1 \\ 1 & \omega-1 & -\omega \end{pmatrix}. \]
with the following character values 
\[
\begin{array}{c|rrrrrrrrrr}
  \rm class&\rm1&\rm2A&\rm2B&\rm2C&\rm3&\rm5A&\rm5B&\rm6&\rm10A&\rm10B\cr
  \rm size&1&1&15&15&20&12&12&20&12&12\cr
\hline

  \chi_\rho &3&-3&1&-1&0&\omega&1-\omega&0&\omega-1&-\omega\cr 

\end{array}.
\]
There are 15 pseudoreflections as observed from the character values of $\rho$. Indeed there $15$ elements of conjugacy class $2B$ whose trace is $1$. This implies for each such element, the dimension of the eigenspace $E_{1}$ is $2$. This is an equivalent condition to fixing a hyperplane pointwise. Now considering the  arrangement of these $15$ hyperplanes, the projectivization of the arrangement in $\P^2$ admits $\A$. Let $\widetilde{G}=A_5$, the image of $G$ in $\PGL(3,\C)$. 

\begin{remark}\label{orbitstabs} The action of $\widetilde{G}$ on $\P^2$ with respect to the line configuration is the following. Notice that the lines and points may be defined over $\Q(\omega)$. 
\begin{enumerate}
    \item The line $x=0$ is a line of $\A$. The 15 lines of $\A$ form a single orbit of size $15$. 
    \item The point $p_2=[0:0:1]$ is a double point and has stabilizer $\widetilde{G}_{p_2}=D_4$, the Klein four-group. The double points of $\A$ form a single orbit of size $15$ and each double point has stabilizer isomorphic to $D_4$. 
    \item The point $p_3=[1:1:1]$ is a triple point and has stabilizer $\widetilde{G}_{p_3}=D_6.$ The triple points of $\A$ form a single orbit of size $10$ and each triple point has stabilizer isomorphic to $D_6$. 
    \item The point $p_5=[\omega+1:0:2]$ is a quintuple point and has stabilizer $\widetilde{G}_{p_5}=D_{10}$. The quintuple points of $\A$ form a single orbit of size $6$ and each quintuple point has stabilizer isomorphic to $D_{10}$. 
\end{enumerate}
\end{remark}

The group $G$ acts naturally on the homogeneous coordinate ring $S$ of $\P^2$. We may then consider the ring of invariants $S^G$, the polynomials that are invariant under the action of $G$. The curves given by these polynomials are $G$-invariant. The ring $S^G$ is generated by particular polynomials $\phi_2,\phi_6,$ and $\phi_{10}$ of degrees $2,6,$ and $10$ \cite{Klein}. The invariant ring $S^{\widetilde{G}}$ is generated by these polynomials in addition to a fourth invariant $\phi_{15}$. The polynomials $\phi_2,\phi_6$ and $\phi_{10}$ are algebraically independent, while $\phi_{15}^2$ can be expressed in terms of the other generators. To obtain the generating polynomials $\phi_d$, note that $G$ is an orthogonal group since it is the symmetry group of the regular icosahedron. Since each element of $G$ preserves distance, we have $\phi_2=x^2+y^2+z^2$ to be a degree $2$ invariant. Recall from Section \ref{ss-ico} the pair of opposite vertices and the unique plane $P$ corresponding to this pair. There are $6$ such pairs of opposite vertices, thus $6$ unique planes. We take $\phi_6$ to be the product of the defining equations of these planes 

\[\phi_6=x^{4}y^{2}+y^{4}z^{2}+x^{2}z^{4}+4\,\omega\,x^{2}y^{2}z^{2}-\left(\omega+1\right)(x^{2}y^{4}+y^{2}z^{4}+x^{4}z^{2}).\]

Note that each plane $P$ contains $5$ double points and so for each double point $p_2$, we have $\phi_6\equiv 0  $ modulo $\mathfrak{m}_{p_2}^2$, the maximal ideal corresponding to $p_2$. Similarly, there ten pairs of opposite faces with opposite centers. There is a unique plane sandwiched between a pair of opposite faces that cuts the icosahedron in half. We take $\phi_{10}$ to be the product of the defining equations of the planes 

\begin{align*} \phi_{10}=&x^8y^2+x^2z^8+y^8z^2+(3\omega-5)(x^2y^8+x^8z^2+y^2z^8)+(3\omega-7)(x^6y^4+x^4z^6+y^6z^4)\\
&-(6\omega-11)(x^4y^6+x^6z^4+y^4z^6)-(30\omega-40)(x^6y^2z^2+x^2y^6z^2+x^2y^2z^6)\\
&+(45\omega-60)(x^2y^4z^4+x^4y^2z^4+x^4y^4z^2)
\end{align*}

Similarly, we have $\phi_{10}\equiv 0 \mod \mathfrak{m}_{p_2}^2$ for each double point. The polynomial ${\phi}_{15}$ is the defining polynomial of the line configuration $\A$. It may be obtained as the Jacobian determinant  
\[ \phi_{15} =  \begin{vmatrix}
\partial\phi_2/\partial x & \partial\phi_6/\partial x & \partial \phi_{10}/\partial x\\
\partial\phi_2/\partial y & \partial\phi_6/\partial y & \partial \phi_{10}/\partial y\\
\partial\phi_2/d\partial z & \partial\phi_6 / \partial y & \partial \phi_{10}/\partial z

\end{vmatrix}.
\]
The polynomial $\phi_{15}$ satisfies the following relation %for dimensional reasons, we know that phi15 squared can be expressed as a product of others. 
\begin{align*}
c\phi_{15}^2=&125\phi_{10}^3+(1300\omega-2275)\phi_2^2\phi_6\phi_{10}^2+(-12\omega+16)\phi_2^5\phi_{10}^2+(46800\omega-75600)\phi_2\phi_6^3\phi_{10}\\
&-(6360\omega-10335)\phi_2^4\phi_6^2\phi_{10}+(200\omega-320)\phi_2^7\phi_6\phi_{10}+(343872\omega-556416)\phi_2^5\\
&-(84624\omega-136912)\phi_2^3\phi_6^4+(6916\omega-11193)\phi_2^6\phi_6^3-(188\omega-304)\phi_2^9\phi_6^2
\end{align*}
for an appropriate constant $c\in \C^*$.

\begin{remark}\label{chopped} Continuing the theme of Remark \ref{orbitstabs}, we record how $\widetilde{G}$ acts on orbits of $\P^2$. The representation $\rho$ is irreducible and so $G$ fixes no point of $\P^2$. 
\begin{enumerate}
    \item The double points of $\mathcal{A}$ form an orbit of $15$ points.
    \item The triple points of $\mathcal{A}$ form an orbit of 10 points.
    \item The quintuple points of $\mathcal{A}$ form an orbit of $6$ points.
    \item The curves $\phi_2=0$ and $\phi_6=0$ intersect at and form an orbit of $12$ points. These points do not lie on $\A$. One of the points is 
     \[ \left[\sqrt{\frac{1}{2}(5+\sqrt{5}}):1:1-\omega \right] \] 
     and this point is not defined over $\Q(\omega)$.  
    \item The curves $\phi_2=0$ and $\phi_{10}=0$ intersect at and form an orbit of $20$ points. These points do not lie on $\A$. One of these points is 
    \[ [\zeta:\overline{-\zeta}:1]\]
    where $\zeta=e^{2\pi i/3}$. Similarly, one would need to extend to the field $\Q(\omega,\zeta)$ to define these intersections. 
    \item The curves $\phi_6=0$ and $\phi_{10}=0$ intersect at the double points of $\A$. Each curve passes through a double point twice, therefore by Bezout's theorem, the curves only meet at the double points.  
    \item A non-singular point of the line configuration has an orbit of $30$ points.
    \item Otherwise a point has an orbit of $60$ points. 

\end{enumerate}
\end{remark}

\section{Waldschmidt Computations on the blow-up of $\P^2$}\label{s-wald}

In this section, we compute an upper bound for $\widehat{\alpha}(I_\A)$ and discuss our approach to proving a lower bound. To give such bounds, we consider the blow-up of $\P^2$ at the singularities of $\A$. In particular, we study certain divisor classes on this blow-up. These divisor classes correspond to curves on $\P^2$ with incidence relations to $\A$. To obtain a lower bound, we rely on the nefness of a specific divisor $D$. We prove the nefness of $D$ in Section \ref{s-meat}. In Subsection \ref{ss-sub-point}, we consider the 3 point configurations corresponding to the three types of $k$-uple points and directly compute the Waldschmidt constant of these configurations.   

\subsection{Divisor classes on the blow-up} Denote the blow-up of $\P^2$ at the singularities of $\mathcal{A}$ by $X_\mathcal{A}$. Then $\Pic(X_\A)$ is generated by the pullback of a line $H$ and 31 exceptional divisors. The divisor class of the proper transform of the line configuration on the blow-up is 
\[ A = 15H-5E_5-3E_3-2E_2 \]
where $E_m$ is the sum of the exceptional divisors lying over the points of multiplicity $m$ for $m\ge 2$. These divisor classes on the blow-up are subject to the following intersections 
\[ H^2 = 1,\hspace{.6cm} E_5^2 = -6, \hspace{.6cm} E_3^2 = -10, \hspace{.6cm} E_2^2 = -15, \hspace{.2cm} \textrm{ and } \hspace{.2cm}  H\cdot E_i = E_i\cdot E_j = 0 \]
for $i\ne j$. For example, $A^2 = -75$. With this, we now are able to bound $\widehat{\alpha}(I_\A)$ from above.

\begin{lemma} Let $I_\A$ be the homogeneous ideal corresponding to the singularities of $\mathcal{A}$. Then 
\[ \widehat\alpha(I_A)\le \frac{11}{2}. \]
\end{lemma}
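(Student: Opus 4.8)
The plan is to use the standard fact that $\widehat\alpha(I_\A)=\inf_{m\ge 1}\alpha(I_\A^{(m)})/m$: since $I_\A^{(a)}I_\A^{(b)}\subseteq I_\A^{(a+b)}$, the sequence $\alpha(I_\A^{(m)})$ is subadditive, so by Fekete's lemma the limit defining $\widehat\alpha(I_\A)$ equals this infimum. Hence it suffices to exhibit one value of $m$ together with a curve $C\subset\P^2$ of degree at most $\tfrac{11}{2}m$ having multiplicity $\ge m$ at each of the $31$ singular points of $\A$. I would take $m=10$ and look for a curve of degree $55$. On $X_\A$ the class
\[ D_0=55H-10E_5-10E_3-10E_2 \]
is exactly $A+D$, where $A=15H-5E_5-3E_3-2E_2$ is the class of the arrangement $\A=\{\phi_{15}=0\}$ (visibly effective) and $D=40H-5E_5-7E_3-8E_2$ is the distinguished class of the introduction. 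A curve in the class $D_0$ has degree $55$ and multiplicity $\ge 10$ at every one of the $31$ points, so it lies in $I_\A^{(10)}$, giving $\alpha(I_\A^{(10)})\le 55$ and therefore $\widehat\alpha(I_\A)\le 55/10=11/2$.

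Thus the real task is to show $D_0$ is effective, which I would do by proving $D$ is effective — equivalently, by producing a curve $\Gamma$ of degree $40$ with multiplicity $\ge 5$ at each quintuple point, $\ge 7$ at each triple point, and $\ge 8$ at each double point; then $C=\A\cup\Gamma$ is the desired degree-$55$ curve, since the arrangement contributes multiplicities $5,3,2$ at the three orbit types. To build $\Gamma$ I would exploit the $\widetilde G$-action: because $\phi_6$ and $\phi_{10}$ already vanish to order $2$ at every double point, I would search for a $\widetilde G$-invariant $\Gamma$, written as a weighted-homogeneous polynomial in $\phi_2,\phi_6,\phi_{10}$ of degree $40$, so that monomials $\phi_6^b\phi_{10}^c$ with $b+c$ large automatically carry the required order at the double points; it then remains to impose the vanishing at one representative quintuple point and one representative triple point (the rest following by symmetry), and for this one uses the quadratic relation expressing $\phi_{15}^2$ in terms of $\phi_2,\phi_6,\phi_{10}$, which lets a suitable linear combination vanish to higher order at those points than any single monomial does. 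It is worth noting that mere products of the invariants $\phi_2,\phi_6,\phi_{10},\phi_{15}$ only yield the weaker estimate $\widehat\alpha(I_\A)\le 6$, so some genuinely non-monomial curve of this kind is unavoidable.

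The main obstacle is precisely this construction of $\Gamma$ (equivalently, the effectivity of $D$, or of $D_0$): the naive parameter count is badly negative — one checks $D^2=0$, $D\cdot K_{X_\A}=100$, so $\chi(\mathcal{O}_{X_\A}(D))=-49<0$, and similarly $\chi(\mathcal{O}_{X_\A}(D_0))<0$ — so such a curve exists only because the $31$ points are the exceptional orbits of the reflection group $G$, and its existence must be proved rather than counted. I expect this to require an explicit equation for $\Gamma$ (or an explicit resolution of the base locus of $|D_0|$, peeling off the forced component $A$ and whatever else is forced, followed by a Riemann–Roch computation on the residual system); everything else — the reduction to a single $m$, the identification $D_0=A+D$, and the final implication $\alpha(I_\A^{(10)})\le 55\Rightarrow\widehat\alpha(I_\A)\le 11/2$ — is routine.
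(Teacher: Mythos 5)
Your setup is right as far as it goes --- the Fekete reduction, the identification $55H-10E_5-10E_3-10E_2=A+D$, and the observation that $\chi(\mathcal{O}_{X_\A}(D))=-49<0$ are all correct --- but the proof has a genuine gap exactly where you say it does: you never establish that $D$ (or $A+D$) is effective, and you explicitly defer this to an unspecified explicit construction of a degree-$40$ curve $\Gamma$ with multiplicities $(5,7,8)$. Nothing in your argument produces such a curve, and in fact the paper never asserts that the integral class $D$ is effective; it only ever proves effectivity of $6D$ (via the decomposition $6D=4A+5B+5C$, which requires the hard construction of the degree-$30$ invariant $\psi_{30}$ in Section 4) and of the perturbed classes $kD+2H$. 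So as written the statement is not proved.

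The idea you are missing is that the upper bound does not require a single effective class of slope exactly $11/2$; it only requires a sequence of effective classes whose slopes converge to $11/2$. Your reduction to one fixed $m$ via the infimum formula actually makes the problem strictly harder, because (as your own computation $\chi<0$ shows) no Euler-characteristic argument can produce that single divisor. The paper instead perturbs by $2H$ and lets $k\to\infty$: one computes
\[ \chi\bigl(kD+2H\bigr)=\binom{40k+4}{2}-6\binom{5k+1}{2}-10\binom{7k+1}{2}-15\binom{8k+1}{2}=30k+6>0, \]
and since $h^2$ vanishes for such classes (Serre duality sends them to classes of negative degree), $kD+2H$ is effective for every $k$ with no explicit equation needed. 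Adding the effective class $kA$ gives an effective divisor $(55k+2)H-10k(E_5+E_3+E_2)$, i.e.\ an element of $I_\A^{(10k)}$ of degree $55k+2$, whence $\widehat{\alpha}(I_\A)\le\lim_k(55k+2)/(10k)=11/2$. If you want to salvage your single-$m$ route, you could take $m=60$ and use $10A+5B+5C$, but that imports all of Section 4; the $2H$-perturbation trick gets the upper bound essentially for free.
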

\begin{proof} Consider the divisor class 
\[ D = 40H-5E_5-7E_3-8E_2 .\]
For $k\ge 1$, we calculate the expected dimension of  the linear series $|kD + 2H|$ to be 
\[ \chi(kD + 2H) = {40k + 4 \choose 2} - 6{5k+1 \choose 2} - 10{7k+1 \choose 2} - 15{8k+1 \choose 2} = 6+30k > 0. \]
Therefore $kD + 2H + kA=(55k+2)H-10kE_5-10kE_3-10kE_2$ is an effective divisor class, and there is some element in $I^{(10k)}_\A$ of degree $55k+2$. This gives 
\[  \widehat{\alpha}(I_\A)=\lim_{m\rightarrow \infty} \frac{\alpha(I^{(m)})}{m} \le \lim_{k\rightarrow \infty} \frac{55k+2}{10k} = \frac{11}{2}.\qedhere\]
\end{proof}

To give a lower bound for $\widehat{\alpha}(I_\A)$, we assume the nefness of $D$.

\begin{lemma}\label{lemma-11/2} If $D = 40H-5E_5-7E_3-8E_2$ is a nef divisor, then $\widehat{\alpha}(I_\A)=\frac{11}{2}$.
\end{lemma}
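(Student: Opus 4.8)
The plan is to combine the upper bound $\widehat\alpha(I_\A)\le\frac{11}{2}$ already established with a matching lower bound extracted from the hypothesis that $D$ is nef. The mechanism is the standard one: a nef divisor meets every effective divisor non-negatively. So I would start from an arbitrary nonzero $f\in I_\A^{(m)}$ of minimal degree $d=\alpha(I_\A^{(m)})$, let $C=V(f)\subseteq\P^2$, and pass to the strict transform $\widetilde C$ of $C$ on $X_\A$. Because $f$ lies in the symbolic power, $C$ has multiplicity $m_p\ge m$ at each of the $31$ points, so $\widetilde C=dH-\sum_p m_p\,(\text{exceptional divisor over }p)$ is an \emph{effective} class whose exceptional coefficients are all at least $m$.

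Next I would compute $D\cdot\widetilde C$ using the intersection table ($H^2=1$, each exceptional $(-1)$-curve squares to $-1$, all other products vanish), grouping the $6$ quintuple, $10$ triple and $15$ double points: this gives $D\cdot\widetilde C=40d-5\sum_{\mathrm{quint}}m_p-7\sum_{\mathrm{trip}}m_p-8\sum_{\mathrm{doub}}m_p$. Since every $m_p\ge m$ and the coefficients $5,7,8$ are positive, $D\cdot\widetilde C\le 40d-(5\cdot 6+7\cdot 10+8\cdot 15)m=40d-220m$. Nefness of $D$ forces $D\cdot\widetilde C\ge 0$, hence $40d\ge 220m$, i.e. $\alpha(I_\A^{(m)})\ge\frac{11}{2}m$ for every $m\ge 1$. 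Dividing by $m$ and letting $m\to\infty$ yields $\widehat\alpha(I_\A)\ge\frac{11}{2}$, and together with the upper bound this gives the claimed equality.

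I do not expect a genuine obstacle here once nefness is granted; the only points requiring a little care are (i) that the strict transform of a plane curve is an honest effective divisor on $X_\A$, so that nefness applies to it; (ii) the monotonicity step, where replacing each $m_p$ by $m$ only decreases $D\cdot\widetilde C$ precisely because $D$ has negative exceptional coefficients; and (iii) the existence of the limit defining $\widehat\alpha$, which is standard (it follows from subadditivity of $m\mapsto\alpha(I_\A^{(m)})$, or one may simply note that $\inf_m \alpha(I_\A^{(m)})/m$ already bounds $\widehat\alpha$ from below). The real content, namely that $D=40H-5E_5-7E_3-8E_2$ is in fact nef, is deferred to Section \ref{s-meat} and is not part of this lemma.
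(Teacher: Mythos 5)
Your proof is correct and takes essentially the same approach as the paper: pair the nef class $D$ against the effective class coming from a curve in $I_\A^{(m)}$ and deduce $40\,\alpha(I_\A^{(m)})\ge 220m$. The paper packages this as a contradiction with a hypothetical effective class $\beta H-E_5-E_3-E_2$ for $\widehat{\alpha}(I_\A)<\beta<\tfrac{11}{2}$, whereas you compute directly with the strict transform, which is a slightly more careful rendering of the same intersection calculation.
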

\begin{proof}
Assume $D$ is nef and suppose for contradiction there exists a $\beta\in \Q$ such that 
\[ \widehat{\alpha}(I_\A)< \beta < \frac{11}{2}, \]
so that $F = \beta H - E_5-E_3-E_2$ is effective. Then
\[ F \cdot D = 40\beta-30-70-120 = 40\left(\beta-\frac{11}{2}\right) < 0. \]
This contradicts the nefness of $D$. Therefore $\widehat\alpha(I)\ge \frac{11}{2}$. By the preceding lemma, we have equality. \end{proof}

\section{$G$-irreducible and $G$-invariant curves}\label{s-meat}

In this section, we prove the divisor $D$ is nef. Together with Lemma \ref{lemma-11/2}, Theorem \ref{theorem-nef} proves the main result of the paper.

\subsection{A decomposition of the divisor class $D$}\label{ss-decomp} 

A curve $\mathcal{C}$ is \textit{$G$-irreducible} if its irreducible components are in a single orbit under the action of $G$. For example, $G$ acts transitively on the lines of $\A$ and so the curve $\phi_{15}=0$ is $G$-irreducible. A curve $\mathcal{C}$ is \textit{$G$-invariant} if for every $g\in G$, $g(C)=C$. Observe that $G$-irreducibility of a curve implies the curve is $G$-invariant. We first consider the point-configuration of quintuple points.

\begin{remark}\label{remarFk-irreducibility}
The group $G$ acts on $\P^2$. This action extends to a $G$-action on $\Pic(X_\A)$. If a divisor in $\Pic(X_\A)$ is effective, then we ask if there is a $G$-invariant curve in the corresponding linear series. This suggests to we pay attention to curves invariant under the action of $G$. 
\end{remark}

\begin{observation}\label{ob-classes}Consider the following divisor classes \begin{alignat*}{3}
    &A = 15H &-5E_5-3E_3-2E_2,\\
    &B =\hspace{.15cm} 6H & -2E_2,  \\ 
    &C = 30H &-2E_5-6E_3-6E_2,\\
    &D = 40H &-5E_5-7E_3-8E_2.
\end{alignat*}
\begin{enumerate}
    \item The line configuration $\A$ is $G$-irreducible.
    \item A curve $\mathcal{B}$ of divisor class $B$ can be given by the vanishing of $ \phi_6$. The curve $\mathcal{B}$ is a union of $6$ lines (Figure \ref{fig-phi6}), all in a single orbit under the action of $G$. It follows that $\mathcal{B}$ is $G$-invariant.
    \item The divisor classes $A,B,C$ have negative self-intersection and $D^2=0.$
    \item The divisor class $D$ may be written as the linear combination $6D = 4A + 5B + 5C$.
\end{enumerate} 
\end{observation}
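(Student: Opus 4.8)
The plan is to handle the four assertions separately: (1), (3), and (4) are short verifications against facts already established, while (2) carries the real content.

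For (1): the curve $\{\phi_{15}=0\}$ is by construction the union of the $15$ lines of $\A$, and Remark~\ref{orbitstabs}(1) records that these form a single orbit under $\widetilde{G}$, hence under $G$; so $\A$ is $G$-irreducible directly from the definition. For (3) and (4), I would substitute into the intersection pairing $H^2=1$, $E_5^2=-6$, $E_3^2=-10$, $E_2^2=-15$, $H\cdot E_i=E_i\cdot E_j=0$ for $i\ne j$. This gives $A^2 = 225-150-90-60 = -75$, $B^2 = 36-60 = -24$, $C^2 = 900-24-360-540 = -24$, and $D^2 = 1600-150-490-960 = 0$, which is (3); and $4A+5B+5C = (60+30+150)H - (20+10)E_5 - (12+30)E_3 - (8+10+30)E_2 = 240H - 30E_5 - 42E_3 - 48E_2 = 6D$, which is (4). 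Both are pure arithmetic.

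Assertion (2) is where the work is: the target is to show that the proper transform on $X_\A$ of $\mathcal{B} := \{\phi_6=0\}$ has class $B = 6H - 2E_2$. First, $\phi_6$ was defined in Section~\ref{s-prelims} as the product of the $6$ linear forms cutting out the planes that bisect the $6$ pairs of opposite vertices of the icosahedron, so $\mathcal{B}$ is literally a union of $6$ distinct lines and contributes $6H$. For $G$-irreducibility: $\widetilde{G}$ acts transitively on the $6$ pairs of opposite vertices --- equivalently, on the $6$ quintuple points, by Remark~\ref{orbitstabs}(4) --- and the $6$ lines of $\mathcal{B}$ are indexed $G$-equivariantly by those pairs, so $G$ permutes the lines transitively; hence $\mathcal{B}$ is $G$-irreducible, and in particular $G$-invariant (the latter is also immediate from $\phi_6 \in S^G$). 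It then remains to pin down the multiplicities of $\mathcal{B}$ along the exceptional divisors. Each bisecting plane contains exactly $5$ double points of $\A$, so the $6$ lines of $\mathcal{B}$ meet the $15$ double points in $6\cdot 5 = 30$ incidences; since $\phi_6 \equiv 0 \bmod \mathfrak{m}_{p_2}^2$ for every double point $p_2$ (noted in Section~\ref{s-prelims}), each double point lies on at least $2$ of the $6$ lines, and the count forces exactly $2$, so $\mathcal{B}$ has multiplicity exactly $2$ at every double point. Finally, $\mathcal{B}$ passes through no triple or quintuple point: since $\mathcal{B}$ is $G$-invariant and $\widetilde{G}$ is transitive on each orbit of singular points, it suffices to test one representative of each, and substituting $p_3 = [1:1:1]$ and $p_5 = [\omega+1:0:2]$ into the explicit form of $\phi_6$ gives $\phi_6(p_3) = \omega \ne 0$ and $\phi_6(p_5) \ne 0$. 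Altogether the proper transform of $\mathcal{B}$ is $6H - 2E_2 = B$.

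The one genuinely non-mechanical point is fixing these exceptional multiplicities --- that $\mathcal{B}$ has multiplicity exactly $2$ (and not more) at each double point and multiplicity $0$ at the remaining singularities. The incidence count above handles the first; as a cross-check one can instead apply Bezout to the pair $\phi_6, \phi_{10}$ together with Remark~\ref{chopped}(6): the $6\cdot 10 = 60$ intersection points are exactly accounted for by $15$ double points each of local intersection $2\cdot 2 = 4$, which again forces multiplicity exactly $2$ for $\phi_6$ there. The second is a single evaluation of $\phi_6$ per orbit, legitimate because $\mathcal{B}$ is $G$-invariant. Once (2) is in hand the Observation is complete, and the decomposition $6D = 4A+5B+5C$ is precisely what Section~\ref{s-meat} exploits to deduce that $D$ is nef.
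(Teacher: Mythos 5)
Your verification is correct and follows exactly the route the paper intends: the paper states this as an Observation without proof, and every ingredient you use (the orbit structure from Remark~\ref{orbitstabs}, the incidence $\phi_6\equiv 0 \bmod \mathfrak{m}_{p_2}^2$ at the $15$ double points, and the intersection pairing on $\Pic(X_\A)$) is exactly what Section~\ref{s-prelims} sets up for this purpose. Your extra care in pinning down that the multiplicity at each double point is exactly $2$ (via the $6\cdot 5=30$ incidence count, or the Bezout cross-check against $\phi_{10}$) and that $\phi_6$ misses the triple and quintuple points is a worthwhile detail the paper leaves implicit, and your arithmetic for $A^2=-75$, $B^2=C^2=-24$, $D^2=0$, and $6D=4A+5B+5C$ all checks out.
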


\begin{figure}
    \centering
    \includegraphics[width=8cm]{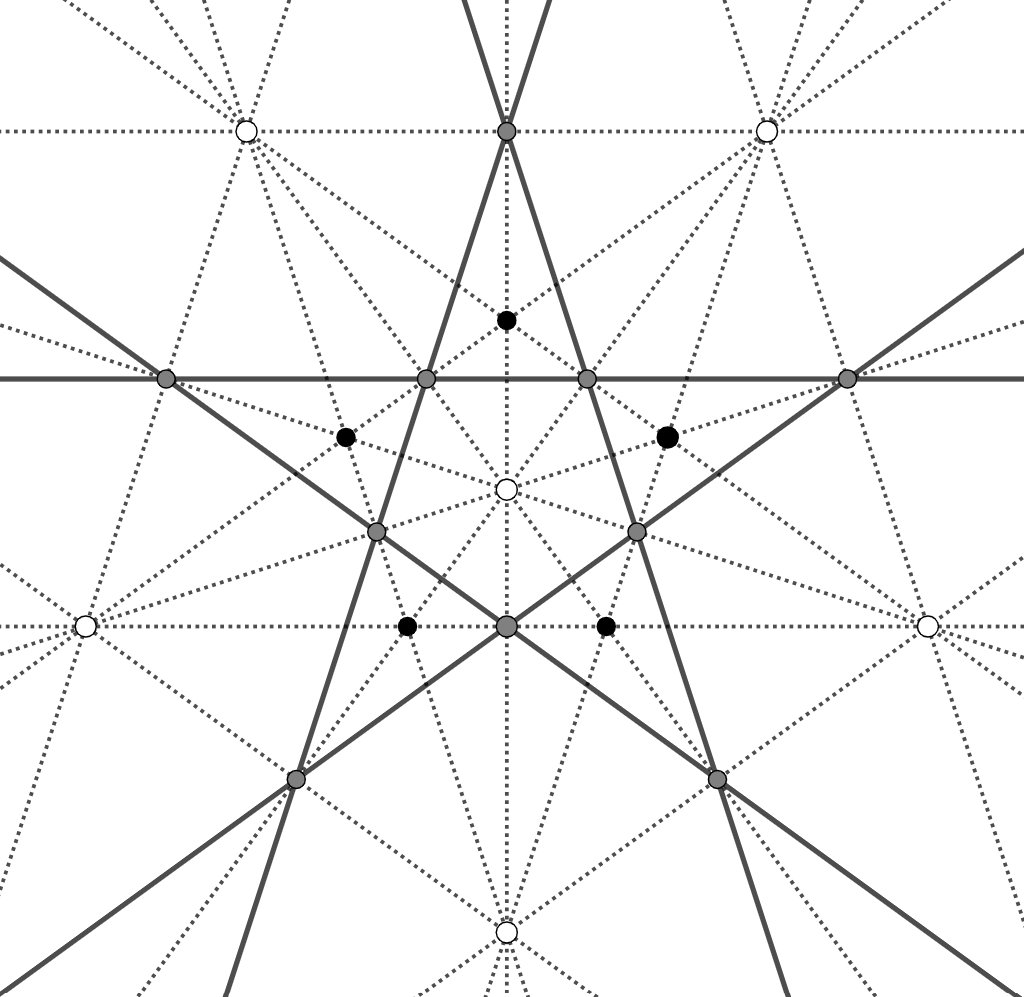}
    \caption{The curve $\phi_6=0$ is a union of the five solid lines and the line at infinity.}
    \label{fig-phi6}
\end{figure}
\begin{proposition}\label{prop-C}
The divisor class $C$ is effective and there is a $G$-irreducible curve $\mathcal{C}$ of class $C$. 
\end{proposition}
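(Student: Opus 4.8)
The plan is to exhibit $\mathcal{C}$ explicitly as an invariant curve built out of the fundamental invariants $\phi_2,\phi_6,\phi_{10}$. The divisor class $C = 30H - 2E_5 - 6E_3 - 6E_2$ is $G$-invariant (its coefficients depend only on the orbit type of a point, and $\widetilde G$ acts transitively on each orbit by Remark \ref{orbitstabs}), so by the philosophy of Remark \ref{remarFk-irreducibility} one expects a $G$-invariant representative. The natural candidates for a $G$-invariant plane curve of degree $30$ are the elements of the degree-$30$ piece of $S^G$, spanned by the monomials $\phi_2^{15}$, $\phi_2^{12}\phi_6$, $\phi_2^9\phi_6^2$, $\phi_2^6\phi_6^3$, $\phi_2^3\phi_6^4$, $\phi_6^5$, $\phi_2^{10}\phi_{10}$, $\phi_2^7\phi_6\phi_{10}$, $\phi_2^4\phi_6^2\phi_{10}$, $\phi_2\phi_6^3\phi_{10}$, $\phi_2^5\phi_{10}^2$, $\phi_2^2\phi_6\phi_{10}^2$, $\phi_{10}^3$. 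First I would impose the vanishing conditions: $\phi_6$ and $\phi_{10}$ already vanish doubly at every double point (as recorded after the definitions of $\phi_6,\phi_{10}$ in Section \ref{s-prelims}), so any invariant divisible by a product $\phi_6^a\phi_{10}^b$ with $a+b\ge 1$ automatically has multiplicity $\ge 2$ at each $p_2$; thus I would look inside the ideal generated by $\phi_6$ and $\phi_{10}$ in degree $30$, i.e.\ I would drop the pure power $\phi_2^{15}$.

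Next I would pin down the multiplicities at the triple and quintuple points. The key local computation is to evaluate each of $\phi_2,\phi_6,\phi_{10}$ — and their first and second partials — at the representative points $p_3 = [1:1:1]$ and $p_5 = [\omega+1:0:2]$, using the $D_6$- resp.\ $D_{10}$-stabilizer to control which derivatives can be nonzero. Because $C$ asks for multiplicity $6$ at each triple point and only $2$ at each quintuple point, while $A=15H-5E_5-3E_3-2E_2$ is the class of $\mathcal A=\{\phi_{15}=0\}$ which has multiplicity $5$ at $p_5$ and $3$ at $p_3$, a clean way to get the quintuple condition is to write $\mathcal C$ as (roughly) a conic-like invariant vanishing suitably; more concretely, I expect $\mathcal C$ to be a specific $\C$-linear combination of $\phi_2^3\phi_6^4$, $\phi_2\phi_6^3\phi_{10}$, $\phi_2^2\phi_6\phi_{10}^2$, $\phi_{10}^3$, etc., and the right combination is determined by finitely many linear equations coming from vanishing of the relevant jets at $p_3$ and $p_5$. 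I would solve this small linear system and display the resulting polynomial, then verify directly that the curve it defines has exactly the required multiplicities and no worse (checking, e.g., that it does not contain a line of $\A$, so that it is not forced to absorb extra multiplicity), which establishes effectivity of $C$.

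Finally, for $G$-irreducibility I would argue that the curve $\mathcal C$ just produced has all of its irreducible components in a single $G$-orbit. Here I would use the relation $c\phi_{15}^2 = 125\phi_{10}^3 + \cdots$ and the intersection data of Remark \ref{chopped}: for instance, if $\mathcal C$ were reducible with components not permuted transitively, the union of a proper $G$-stable sub-collection would itself be an invariant curve of strictly smaller degree with the same forced multiplicities, and one checks against the (short) list of low-degree invariants that no such curve exists; alternatively, the components must all be translates of one another because $\widetilde G = A_5$ is simple and the degree $30 = 30\cdot 1$ is exactly the size of the generic orbit in Remark \ref{chopped}(7), suggesting $\mathcal C$ is an irreducible (smooth or nodal) curve of degree $30$, hence trivially $G$-irreducible; I would confirm irreducibility by a genus/adjunction or Bezout count against $\phi_2,\phi_6$. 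The main obstacle I anticipate is the bookkeeping in the second step: correctly computing the $2$-jets of $\phi_6$ and $\phi_{10}$ at $p_3$ and $p_5$ and verifying that the linear system for the coefficients has a solution giving exactly (not more than) the multiplicities in $C$ — in particular ruling out that every invariant of degree $30$ vanishing to order $6$ at the triple points is forced to vanish to order $>2$ at the quintuple points, which would make $C$ non-effective. That dimension count is the crux and is where I would spend the most care.
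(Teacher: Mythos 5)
Your overall strategy --- seek $\mathcal{C}$ as an explicit element of the degree-$30$ graded piece of $T=\C[\phi_2,\phi_6,\phi_{10}]$, determine it by linear conditions on jets at representative singular points, then argue $G$-irreducibility --- is exactly the paper's (Sections \ref{ss-invariant}--\ref{ss-C}). But there are two concrete gaps. First, the class $C=30H-2E_5-6E_3-6E_2$ requires multiplicity $6$ at the double points, and your plan only secures multiplicity $2$ there: dropping $\phi_2^{15}$ and working in the ideal $(\phi_6,\phi_{10})$ gives order $\ge 2$ at each $p_2$, and your subsequent linear system imposes jet conditions only at $p_3$ and $p_5$. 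A generic solution of that system (one containing, say, $\phi_2^{12}\phi_6$) would have multiplicity only $2$ at the double points, hence class $30H-2E_5-6E_3-2E_2$, not $C$. You must either restrict to monomials $\phi_2^a\phi_6^b\phi_{10}^c$ with $b+c\ge 3$ (so that multiplicity $6$ at $p_2$ is automatic) or add the order-$6$ jet conditions at $p_2$ to your system. The paper does this implicitly: every term of its ansatz $\psi_{30}=\lambda_1\psi_{10}^3+\lambda_2\psi_2^2\psi_6\psi_{10}^2+\lambda_3\psi_2\psi_6^2\psi_6'\psi_{10}+\lambda_4\psi_6^3\psi_6'^2$ contains three factors vanishing at the double points, and Lemma \ref{lemma-jacks} (an invariant vanishing at a singularity vanishes there to order $\ge 2$, with $2$-jet confined to a one-dimensional submodule) makes order $6$ at $p_2$ and order $4$ at $p_3$ automatic. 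That reduces everything to $5$ linear conditions on $4$ unknowns with a one-dimensional solution space --- which is precisely what resolves the ``crux'' dimension count you rightly worry about. Your remark about using the stabilizers $D_6$ and $D_{10}$ to control derivatives is the germ of Lemma \ref{lemma-jacks}, but without normalizing the invariants into $\psi_6,\psi_6',\psi_{10}$ (chosen to vanish at prescribed orbit types) the naive system has on the order of $45$ conditions on $13$ unknowns, and the existence of a nonzero solution is exactly what remains unproved in your sketch.

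Second, the $G$-irreducibility argument is not sound as written. The claim that the components must lie in one orbit because ``$30$ is the size of the generic orbit'' is a non sequitur: the degree of a curve has nothing to do with orbit sizes of points, and simplicity of $A_5$ does not force transitivity on components. Your other alternative --- ruling out proper $G$-stable subcurves of smaller degree --- is viable in principle but incomplete, since you would need to control which multiplicities such a subcurve is forced to carry. The paper instead pushes $\mathcal{C}$ down to $\P(2,6,10)$, where $\psi_{30}$ becomes a cubic in $w_{10}$ over $\C[w_2,w_6]$, shows by a short degree/divisibility argument that this cubic cannot factor, and concludes $G$-irreducibility of the pullback because the components of the preimage of an irreducible curve under the quotient map are permuted transitively by $G$. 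You should replace the orbit-count heuristic with an argument of this kind.
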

We dedicate Section \ref{ss-C} to give an equation for a $G$-irreducible curve $\mathcal{C}$ of divisor class $C$. This proposition allows us to prove the main result.
 
\begin{theorem}\label{theorem-nef} The divisor class $D=40H-5E_5-7E_3-8E_2$ is nef. Consequently 
\[ \widehat{\alpha}(I_\mathcal{A}) = \frac{11}{2}. \] 
\end{theorem}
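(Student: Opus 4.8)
## Proof proposal for Theorem \ref{theorem-nef}

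\textbf{Overall strategy.} The plan is to show that $D$ is nef by using the decomposition $6D = 4A + 5B + 5C$ from Observation \ref{ob-classes}(4), together with the fact (Proposition \ref{prop-C}) that $C$ is represented by a $G$-irreducible curve $\mathcal{C}$. A divisor is nef iff it intersects every irreducible curve non-negatively. Since $D^2 = 0$, one cannot simply invoke the cone of curves directly; instead the idea is to reduce to checking $D \cdot \Gamma \ge 0$ for the finitely many ``problematic'' curves $\Gamma$, namely the irreducible curves with $D \cdot \Gamma < 0$ if any existed. The key leverage is the $G$-action: by Remark \ref{remarFk-irreducibility}, if $D$ fails to be nef, the obstruction can be taken to be $G$-invariant, and then we analyze $G$-invariant (equivalently, $G$-irreducible) curves of low degree.

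\textbf{Main steps.} First, I would establish that to prove $D$ nef it suffices to prove $D \cdot \Gamma \ge 0$ for every $G$-irreducible curve $\Gamma$ on $X_\mathcal{A}$: given any irreducible curve with $D\cdot\Gamma<0$, its $G$-orbit sums to a $G$-invariant effective divisor still meeting $D$ negatively, and one of its $G$-irreducible components must meet $D$ negatively. Second, I would use $6D = 4A + 5B + 5C$: for a $G$-irreducible curve $\Gamma$ distinct from (the components of) $\mathcal{A}$, $\mathcal{B}$, $\mathcal{C}$, each of $A\cdot\Gamma$, $B\cdot\Gamma$, $C\cdot\Gamma$ is $\ge 0$ because $\mathcal{A}$, $\mathcal{B}$ and $\mathcal{C}$ are each effective with no common component with $\Gamma$ (here $G$-irreducibility of $\mathcal{C}$ is what guarantees it has no component in common with an arbitrary $G$-irreducible $\Gamma$ unless $\Gamma$ equals $\mathcal{C}$); hence $6D\cdot\Gamma \ge 0$. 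Third, I would handle the finitely many exceptional curves: the components of $\mathcal{A}$ (the $15$ lines, all in one orbit), of $\mathcal{B}$ (the $6$ lines of $\phi_6 = 0$), of $\mathcal{C}$, and the $31$ exceptional divisors $E_p$. For these one computes $D\cdot\Gamma$ explicitly using the intersection numbers $H^2=1$, $E_5^2=-6$, $E_3^2=-10$, $E_2^2=-15$: e.g. $D\cdot E_p \ge 0$ is immediate since the $E_p$ coefficients of $D$ are non-negative; $D$ against a line of $\mathcal{A}$, which as a divisor on $X_\mathcal{A}$ is $H - E_5 - \cdots$ summing the points on that line, is a short check; likewise $D\cdot \mathcal{B}$ and $D\cdot\mathcal{C}$ can be read off (indeed $D\cdot C = \tfrac{1}{6}(4A+5B+5C)\cdot C$, and since $D^2 = 0$ one expects $D\cdot C = 0$, consistent with $D$ lying on the boundary of the nef cone). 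Finally, nefness plus Lemma \ref{lemma-11/2} yields $\widehat{\alpha}(I_\mathcal{A}) = \tfrac{11}{2}$.

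\textbf{Main obstacle.} The delicate point is Step three combined with the ``no common component'' claim in Step two: I must be sure that every $G$-irreducible curve other than $\mathcal{A}$, $\mathcal{B}$, $\mathcal{C}$ genuinely shares no component with $\mathcal{A}\cup\mathcal{B}\cup\mathcal{C}$, and I must rule out the possibility that $\mathcal{C}$ itself (or $\mathcal{B}$, or a line of $\mathcal{A}$) satisfies $D\cdot(\cdot) < 0$. Because $D^2 = 0$, $D$ sits exactly on the boundary of the nef cone, so these boundary intersection numbers must all come out to be exactly $0$ or positive with no slack — any arithmetic slip would be fatal. Concretely I anticipate $D\cdot A = D\cdot B = D\cdot C = 0$ (forced by $6D^2 = 4\,D\cdot A + 5\,D\cdot B + 5\,D\cdot C = 0$ once each summand is shown $\ge 0$), and the real work is verifying $D \cdot \ell \ge 0$ for the line classes $\ell$ appearing in $\mathcal{A}$ and $\mathcal{B}$ and $D\cdot\Gamma\ge 0$ for the components of $\mathcal{C}$ using the explicit equation produced in Section \ref{ss-C}. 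The $G$-invariance reduction is what keeps this list finite and the computation tractable.
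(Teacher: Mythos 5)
Your proposal is correct and follows essentially the same route as the paper: write $6D = 4A + 5B + 5C$ with $A$, $B$, $C$ effective and $G$-irreducible, conclude $D$ is effective so that nefness need only be checked against the components of this decomposition, and use $G$-invariance of $D$ together with $D\cdot A = D\cdot B = D\cdot C = 0$ to handle each orbit of components at once. Your extra explicit checks (the exceptional divisors and the individual lines) are subsumed in the paper's argument by the single-orbit structure of the components of $A$, $B$, $C$, but they do no harm.
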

\begin{proof}
By Observation \ref{ob-classes} and Proposition \ref{prop-C}, the divisor classes $A,B,$ and $C$ are effective and $G$-irreducible. The decomposition $6D = 4A + 5B + 5C$ shows that $D$ is effective.  Since an effective divisor will only intersect non-negatively on irreducible components, it suffices to use the $G$-action on $\Pic(X_\A)$ to show that $D$ intersects non-negatively on its $G$-irreducible components. Since $D\cdot A = D\cdot B = D\cdot C =0$, $D$ is nef. The Waldschmidt constant $\widehat{\alpha}(I_\mathcal{A}) = 11/2$ follows from Lemma \ref{lemma-11/2}.
\end{proof}

\subsection{On $G$-invariant curves}\label{ss-invariant}

To prove Proposition \ref{prop-C}, we produce an equation for the curve $\mathcal{C}$ and prove it is $G$-irreducible. Since $G$-irreducible curves are also $G$-invariant, the following lemma says such defining equations are contained in the subalgebra $T=\C[\phi_2,\phi_6,\phi_{10}]\subseteq S$. Consider the map 
\begin{align*}
    \varphi :\hspace{.1cm}&\P^2\rightarrow \P(2,6,10)\\
    &  p \mapsto [\phi_2(p) : \phi_6(p) : \phi_{10}(p)]
\end{align*}
where $\P(2,6,10)$ is a weighted projective space. This space is isomorphic to the quotient space $\P^2/G$ and $\varphi$ is the quotient map.

\begin{lemma}\label{lemma-defining} Let $\mathcal{C}\subseteq \P^2$ be a $G$-invariant curve which does not contain the line configuration $\A$. Then the defining equation $f\in S$ of $\mathcal{C}$ is $G$-invariant and is contained in $T$. 
\end{lemma}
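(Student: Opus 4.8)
The plan is to reduce the claim to a statement about the invariant ring $S^G$ together with the fact that $\phi_{15}$ generates $S^{\widetilde G}$ over $S^G$. First I would observe that since $\mathcal C$ does not contain $\A$, and $\A$ is cut out by $\phi_{15}$, the defining polynomial $f$ of $\mathcal C$ is not divisible by $\phi_{15}$. Next, $G$-invariance of $\mathcal C$ means $g(\mathcal C)=\mathcal C$ for all $g\in G$, so for each $g$ the polynomials $f$ and $g\cdot f$ define the same curve; since $f$ is (up to scalar) the reduced or minimal defining equation and $S$ is a UFD, this forces $g\cdot f=\chi(g)\,f$ for a character $\chi:G\to\C^*$. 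Thus $f$ is a relative (semi-)invariant of $G$.

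The core step is then to rule out all nontrivial characters. Because $G=A_5\times\Z_2$, its abelianization is $\Z_2$ (the $A_5$ factor is perfect), so there is exactly one nontrivial character $\chi$, namely the one that is trivial on $A_5$ and $-1$ on the central involution $-\mathrm{Id}$. A relative invariant for this $\chi$ is an element of $S$ on which $-\mathrm{Id}$ acts by $-1$, i.e.\ an odd-degree form lying in the $\widetilde G$-invariant ring; by Klein's classical description the space of $\widetilde G=A_5$-invariants that are odd semi-invariants for $G$ is exactly $\phi_{15}\cdot T$. Hence either $\chi$ is trivial and $f\in S^G=T$, or $\chi$ is the sign character and $f\in\phi_{15}\cdot T$, which would force $\phi_{15}\mid f$, contradicting that $\mathcal C\not\supseteq\A$. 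Therefore $f\in T=\C[\phi_2,\phi_6,\phi_{10}]$, as desired.

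The step I expect to be the main obstacle is the second paragraph's classification of semi-invariants: namely, justifying cleanly that every $\widetilde G$-invariant that fails to be $G$-invariant is divisible by $\phi_{15}$. The clean way to see this is via the quotient map $\varphi:\P^2\to\P^2/G\cong\P(2,6,10)$: a $G$-invariant hypersurface in $\P^2$ is the preimage of a hypersurface in the quotient, and hypersurfaces in $\P(2,6,10)$ are cut out by weighted-homogeneous polynomials in $\phi_2,\phi_6,\phi_{10}$, so their pullbacks lie in $T$. For the semi-invariant case one uses that $S^{\widetilde G}=T\oplus\phi_{15}T$ as a $T$-module (from the relation expressing $\phi_{15}^2$ in terms of $\phi_2,\phi_6,\phi_{10}$ displayed above), with the two summands being the $+1$ and $-1$ eigenspaces of the central involution; an odd semi-invariant therefore lies in $\phi_{15}T$, and divisibility by $\phi_{15}$ follows. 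With the non-containment hypothesis, this second case is excluded and the lemma follows.
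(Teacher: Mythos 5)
Your argument is correct, but it follows a genuinely different route from the paper's. The paper's proof is geometric: because $\mathcal{C}$ is $G$-invariant and, by the hypothesis $\mathcal{C}\not\supseteq\A$, a general point of $\mathcal{C}$ avoids the ramification locus of the quotient map $\varphi:\P^2\to\P(2,6,10)$, the map $\varphi$ is a local isomorphism at a general point of $\mathcal{C}$; hence $\varphi(\mathcal{C})$ is cut out by a weighted homogeneous equation $g$ and $f=\varphi^*g$ lies in $T$. You instead argue algebraically: unique factorization in $S$ turns $G$-invariance of the curve into semi-invariance of $f$ for a character of $G$; perfectness of $A_5$ leaves only the trivial character and the sign of the central involution, so in either case $f\in S^{\widetilde{G}}=T\oplus\phi_{15}T$, and the sign eigenspace (the odd-degree part) is exactly $\phi_{15}T$, which the non-containment hypothesis rules out since $\phi_{15}$ cuts out $\A$. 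Your route makes explicit both where the hypothesis enters and why the equation itself, not merely the curve, is invariant --- points the paper's proof leaves implicit --- at the price of invoking the decomposition of $S^{\widetilde{G}}$ as a free $T$-module with basis $1,\phi_{15}$, which does follow from the generators and the relation for $\phi_{15}^2$ recorded in the preliminaries. Either argument suffices for the lemma.
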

%https://arxiv.org/pdf/1604.02441.pdf
\begin{proof}
 We recall the invariants $\phi_2$, $\phi_6$ and $\phi_{10}$ from the preliminaries. The map $\varphi$ is a $60$-uple covering away from the line configuration and the finitely many points in $(1)-(6)$ of Remark \ref{chopped}. In particular, the map $\varphi$ is a local isomorphism at a general point $p\in \mathcal{C}$. We have then that $\varphi(\mathcal{C})$ is defined by a weighted homogeneous equation $g(w_0,w_1,w_2)=0$ where $w_0,w_1,w_2$ are the coordinates of $\P(2,6,10)$. Therefore the pullback $\varphi^*g=f$ defines $\mathcal{C}$ and is contained in $T$. 
\end{proof}

This prompts the following definition: for $m_5,m_3,m_2\ge 0$, let 
\[  T_d(-m_5E_5-m_3E_3-m_2E_2)\]
be the finite-dimensional subspace of $T$ of homogeneous degree $d$ forms that are $m_5$-uple at the $6$ quintuple points, $m_3$-uple at the 10 triple points and $m_2$-uple at the $15$ double points of $\mathcal{A}$. Elements of this space yield $G$-invariant curves in the linear series  $|dH -m_5E_5-m_3E_3-m_2E_2|$. Consider the following curves. There is a unique curve given by the vanishing of $\psi_2\in T_2(0)$. Let $\psi_6\in T_6(0)$ be the unique (up to scale) degree $6$ invariant such that the curve $\psi_6=0$ passes through a double point and $\psi_{10}\in T_{10}(0)$ be the unique (up to scale) invariant such that the curve $\psi_{10}=0$ passes through a double point and a triple point. Additionally, there is a unique invariant $\psi_6'\in T_6(0)$ that passes through a triple point. The invariants $\psi_6$,$\psi_6'$ and $\psi_{10}$ enjoy incidence relations with respect to $\A$, as the following lemma shows. 

\begin{lemma} \label{lemma-camille} Let $p,p'\in \A \subseteq \P^2$ be $k$-uple points. If $C$ is a $G$-invariant curve passing through $p$, then $C$ also passes through $p'$.  
\end{lemma}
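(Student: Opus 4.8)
The plan is to exploit the quotient map $\varphi\colon \P^2\to \P(2,6,10)\cong \P^2/G$ and the orbit–stabilizer data recorded in Remark \ref{orbitstabs}. The key observation is that, for each of the three types of $k$-uple point, the $k$-uple points of $\A$ form a single $\widetilde{G}$-orbit, so $\varphi$ collapses all of them to a single point $q_k\in\P(2,6,10)$. Suppose $C$ is a $G$-invariant curve through a $k$-uple point $p$. If $C$ does not contain $\A$, then by Lemma \ref{lemma-defining} its defining equation $f$ lies in $T=\C[\phi_2,\phi_6,\phi_{10}]$, hence $f=\varphi^*g$ for a weighted-homogeneous $g$ on $\P(2,6,10)$; since $f(p)=0$ we get $g(q_k)=0$, and then $f$ vanishes at every point of $\varphi^{-1}(q_k)$, which contains the entire orbit of $p$ — in particular $p'$. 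So $C$ passes through $p'$.

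The remaining case is when $C$ does contain the line configuration $\A$. Then write $f=\phi_{15}^a\cdot f'$ where $f'$ is not divisible by $\phi_{15}$; since $\phi_{15}$ is itself $G$-semiinvariant (it spans a one-dimensional representation, being the Jacobian), $f'$ is again $G$-invariant, the curve $\{f'=0\}$ does not contain $\A$, and it still passes through $p$ because every $k$-uple point of $\A$ lies on $\A$, hence on $\{\phi_{15}=0\}$, so $p$ lies on $\{f'=0\}$ as well provided $f$ vanishes at $p$ to order accounted for beyond the contribution of $\phi_{15}^a$ — but more simply, $p$ is a point of $\A$ and already lies on both $\{\phi_{15}=0\}$ and $\{f'=0\}$ is not needed: one just notes that since $p$ and $p'$ both lie on $\A\subseteq C$ automatically, there is nothing to prove in this case. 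So the substantive content is entirely the first paragraph.

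First I would set up notation: let $q_5=\varphi(p_5)$, $q_3=\varphi(p_3)$, $q_2=\varphi(p_2)$ be the images of the quintuple, triple, and double points. Then I would invoke Remark \ref{orbitstabs}(2)–(4) to say $\varphi^{-1}(q_k)\cap\A$ is exactly the orbit of $p_k$ (all the $k$-uple points, which is a single orbit of the stated size). Next, apply Lemma \ref{lemma-defining} to reduce to $f=\varphi^*g$; evaluate at $p$ to conclude $g(q_k)=0$; pull back to conclude $f$ vanishes on all of $\varphi^{-1}(q_k)\supseteq\{p,p'\}$. Finally dispatch the degenerate case where $\A\subseteq C$ as above.

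I expect the only real subtlety to be making the reduction to the $T$-case airtight: Lemma \ref{lemma-defining} requires $C$ to not contain $\A$, so I must genuinely handle the $\A\subseteq C$ case separately, and the cleanest statement there is simply that $p,p'\in\A\subseteq C$. A secondary point to be careful about is that one should use $\widetilde{G}=A_5$ acting on $\P^2$ (not $G$) when speaking of orbits of points, matching Remark \ref{orbitstabs}; but $G$-invariance of $C$ is equivalent to $\widetilde{G}$-invariance of the underlying curve, so this is only a bookkeeping matter. No heavy computation is needed; the proof is a short orbit-collapsing argument.
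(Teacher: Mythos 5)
Your argument is correct, but it is considerably more elaborate than necessary, and the paper takes a much more direct route. The paper's entire proof is: by Remark \ref{chopped} the $k$-uple points form a single orbit, so there is $g\in G$ with $g(p)=p'$; since $C=g(C)$, the point $p'=g(p)$ lies on $g(C)=C$. That two-line argument uses only transitivity of the action on $k$-uple points together with the definition of $G$-invariance, and it needs no case split. Your detour through the quotient map $\varphi$ and Lemma \ref{lemma-defining} does establish the same conclusion --- all $k$-uple points collapse to one point $q_k$ of $\P(2,6,10)$, and a pulled-back equation vanishing at $p$ must vanish on all of $\varphi^{-1}(q_k)$ --- but it forces you to invoke the hypothesis of Lemma \ref{lemma-defining} (that $C$ not contain $\A$) and hence to handle the case $\A\subseteq C$ separately. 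Your treatment of that case wanders (the $\phi_{15}^a$ factorization is started and then abandoned) before landing on the right observation that $p,p'\in\A\subseteq C$ makes the claim trivial there. Nothing is wrong, but you should notice that the orbit--invariance argument you are implicitly using inside the quotient-map picture already proves the lemma on its own, without any mention of $T$, $\varphi$, or weighted projective space.
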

\begin{proof}
By Remark \ref{chopped}, there is an element $g\in G$ such that $g(p)=p'$. Since $p$ vanishes at $C$, then $p'=g(p)$ vanishes at $C=g(C)$. 
\end{proof}

We take advantage of the vanishings at $k$-uple points of $\psi_d=0$ to give an equation for $\mathcal{C}$. An additional exploit is to observe the action of the stabilizer $\widetilde{G}_p\subseteq \widetilde{G}\subseteq \PGL(3,\C)$ on the local ring $(\mathcal{O}_p, \mathfrak{m}_p)$ at a singularity $p\in \A$.

\begin{lemma}\label{lemma-jacks} Let $p\in \mathcal{A}\subseteq \P^2$ be a singularity and $\widetilde{G}_p\subseteq G$ the stabilizer of $p$.  Let $w$ be a linear form not passing through $p$. If $\psi\in T$ is a $\widetilde{G}$-invariant homogeneous form and vanishes at $p$, then $\psi$ vanishes to order at least $2$ at $p$ and $\widetilde{\psi}=\psi/w^d$ lies in the $1$-dimensional trivial $\widetilde{G}_p$-submodule of $\mathfrak{m}_p^2 / \mathfrak{m}_p^3$.  
\end{lemma}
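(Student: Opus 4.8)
The plan is to analyze the action of the finite stabilizer $\widetilde{G}_p$ on the tangent space at $p$, using the fact that $\widetilde{G}_p$ is a dihedral group (by Remark \ref{orbitstabs}: $D_4$, $D_6$ or $D_{10}$ according as $p$ is a double, triple or quintuple point) acting on $\P^2$ fixing $p$. First I would reduce the claim about vanishing order to a statement about $\mathfrak{m}_p/\mathfrak{m}_p^2$. Suppose $\psi$ vanishes to order exactly $1$ at $p$; then the leading term $\bar\psi \in \mathfrak{m}_p/\mathfrak{m}_p^2$ is a nonzero linear functional cutting out the tangent line to $\{\psi=0\}$ at $p$. Since $\psi$ is $\widetilde{G}$-invariant and $\widetilde{G}_p$ fixes $p$, the class $\bar\psi$ spans a $1$-dimensional $\widetilde{G}_p$-submodule of the $2$-dimensional space $\mathfrak{m}_p/\mathfrak{m}_p^2$, on which $\widetilde{G}_p$ acts by its natural linear action on the tangent space $T_p\P^2$ (dualized). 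The key point is that this representation of $\widetilde{G}_p$ on $T_p\P^2$ has no $1$-dimensional invariant subspace: for each of $D_4, D_6, D_{10}$ acting as the symmetry group of a singular point of the line configuration $\A$, the induced action on the tangent plane is the standard faithful $2$-dimensional real (hence irreducible over $\R$, and one checks irreducible over $\C$ for $D_6,D_{10}$; for $D_4$ one must be slightly more careful) reflection representation. I would verify this directly from the explicit matrices in $\rho$ — e.g. at $p_2=[0:0:1]$ the stabilizer is generated by $\rho(g),\rho(h)$ acting on the $[x:y]$-coordinates as $\mathrm{diag}(-1,1)$ and $\mathrm{diag}(1,-1)$, whose only common invariant lines are the coordinate axes, and since a $\widetilde{G}_p$-invariant line $\bar\psi=0$ would force $\{\psi = 0\}$ to be tangent to a coordinate axis at $p$, but no such axis is among the components of $\A$ through $p_2$ while $\psi \in T$ cannot be tangent to a line not in $\A$ unless... — here is where the hypothesis that $\psi$ does not contain $\A$ and lies in $T$ must be used, via Lemma \ref{lemma-defining} and the structure of $\varphi$. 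Concretely: if $\bar\psi$ spans an invariant line, then $\{\psi=0\}$ is smooth at $p$ with tangent direction fixed by $\widetilde{G}_p$; but a generic $\widetilde{G}$-orbit through a nearby point of $\{\psi=0\}$ has size $60$ and these must fit on the branch, which is impossible near a smooth point unless the branch is itself $\widetilde{G}_p$-invariant as a germ, forcing its tangent line to be an invariant line — contradicting the representation-theoretic computation. Hence $\psi$ vanishes to order $\ge 2$.

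Next, for the second assertion, set $\widetilde\psi = \psi/w^d$ where $w$ is a linear form with $w(p)\ne 0$; this is a regular function in $\mathcal{O}_p$ vanishing to order $\ge 2$, so its class lies in $\mathfrak{m}_p^2/\mathfrak{m}_p^3$, the space of degree-$2$ pieces, which is $3$-dimensional. Because $\psi$ and $w^d$ are both $\widetilde{G}_p$-semi-invariant — $\psi$ is genuinely $\widetilde{G}$-invariant, and $w^d$ transforms by a character of $\widetilde{G}_p$ coming from the action on the line $w$ — the ratio $\widetilde\psi$ transforms by the inverse character. But in fact one can choose $w$ to be $\widetilde{G}_p$-semi-invariant (pick $w$ to be a suitable linear form, e.g. one of the coordinate forms at $p_2$), and then I would argue the relevant character is trivial, or alternatively work in $\mathfrak{m}_p^2/\mathfrak{m}_p^3$ with its twisted $\widetilde{G}_p$-action and decompose it into irreducibles. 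The decomposition of $\mathrm{Sym}^2$ of the standard $2$-dimensional dihedral representation contains exactly one copy of the trivial representation (spanned by the invariant quadratic form $u^2+v^2$ in suitable coordinates). Since $\widetilde\psi$ is $\widetilde{G}_p$-invariant, its class must lie in that unique trivial summand; this is precisely the $1$-dimensional trivial $\widetilde{G}_p$-submodule of $\mathfrak{m}_p^2/\mathfrak{m}_p^3$ asserted in the lemma.

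The main obstacle I anticipate is the case $\widetilde{G}_p = D_4$ (the Klein four-group at the double points), because its $2$-dimensional representation on $T_p\P^2$ is \emph{not} irreducible over $\C$ — it splits as a sum of two distinct characters, so there \emph{are} two invariant lines, namely the two coordinate axes at $p_2$. Thus the vanishing-order argument cannot be purely representation-theoretic at double points; I must rule out that $\{\psi=0\}$ is tangent at $p_2$ to one of these two axes. This is where I would lean hardest on membership in $T = \C[\phi_2,\phi_6,\phi_{10}]$ and the geometry of Remark \ref{chopped}: the two coordinate axes through $p_2$ — one can check these are among the lines of $\A$ or else are swapped into $\A$ by $\widetilde G$ — cannot be simple tangent directions of a $\widetilde{G}$-invariant curve $\psi \in T$ not containing $\A$, because the remaining dihedral symmetry would then force the branch of $\{\psi=0\}$ at $p_2$ to map to a singular point of the germ of $\varphi(\{\psi=0\})$ in an incompatible way; equivalently, $\mathrm{Sym}^2$ of the $D_4$-representation still contains only one trivial summand, and I would instead prove vanishing order $\ge 2$ by a \emph{parity} argument — the two nontrivial characters of $D_4$ appearing in $\mathfrak{m}_p/\mathfrak{m}_p^2$ are each non-trivial, so a $\widetilde{G}$-invariant $\psi$ cannot have a nonzero $\widetilde{G}_p$-invariant image in $\mathfrak{m}_p/\mathfrak{m}_p^2$ at all, since $\mathfrak{m}_p/\mathfrak{m}_p^2$ has \emph{no} trivial $D_4$-summand. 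That last observation, once checked from the matrices $\rho(g),\rho(h)$, actually makes all three cases uniform: $\mathfrak{m}_p/\mathfrak{m}_p^2$ contains no trivial $\widetilde{G}_p$-summand, so any $\widetilde G$-invariant form vanishing at $p$ vanishes to order $\ge 2$, and then the unique trivial summand of $\mathfrak{m}_p^2/\mathfrak{m}_p^3$ receives the class of $\widetilde\psi$. I would present the proof in this streamlined form, with the explicit stabilizer generators at each of $p_2,p_3,p_5$ as the computational input.
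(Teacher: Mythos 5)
Your streamlined version at the end is essentially the paper's own argument (which defers to \cite[Lemma 4.8]{NCSB}): realize the class of $\widetilde{\psi}$ as a $\widetilde{G}_p$-eigenvector in the graded pieces of $\mathcal{O}_p$ and decompose those pieces under the dihedral stabilizer. Your treatment of the first assertion is correct: $\mathfrak{m}_p/\mathfrak{m}_p^2$ has no trivial summand in all three cases ($D_6$ and $D_{10}$ act irreducibly on the cotangent plane; for the Klein four-group at $p_2=[0:0:1]$ the two characters $\mathrm{diag}(-1,1)$ and $\mathrm{diag}(1,-1)$ are both nontrivial), so the order of vanishing is at least $2$. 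The semi-invariance issue you flag but leave open (``I would argue the relevant character is trivial'') does need to be closed, and the closure is one line: every $\psi\in T$ has even degree, and each element of the stabilizer, lifted to $G\subseteq O(3)$, scales the vector $p$ by $\pm 1$ (by $1$ for the rotation about the axis through $p$), so the character $g\mapsto \lambda_g^{-d}$ by which $[\widetilde{\psi}]$ transforms is trivial. The long geometric detour through orbit sizes and tangency to lines of $\A$ is not needed once this is in place.

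The genuine gap is in the second assertion: your claim that $\mathrm{Sym}^2$ of the $D_4$-representation ``still contains only one trivial summand'' is false. For the Klein four-group acting on the tangent plane at $p_2$ by $\mathrm{diag}(\pm 1,\pm 1)$ one has $\mathrm{Sym}^2(\chi_1\oplus\chi_2)=\mathbf{1}\oplus\mathbf{1}\oplus\chi_1\chi_2$: both $u^2$ and $v^2$ are invariant, so the trivial isotypic component of $\mathfrak{m}_{p_2}^2/\mathfrak{m}_{p_2}^3$ is two-dimensional, and there is no unique trivial line for $[\widetilde{\psi}]$ to land in. (The invariant quadratic form $u^2+v^2$ spans the unique trivial summand only when the rotation has order at least $3$.) The uniqueness you need therefore holds exactly for $D_6$ and $D_{10}$, i.e.\ at triple and quintuple points --- which is all the paper ever uses, since the proportionality $A_2=\mu B_2$ is extracted only at a triple point, while at double points only the order-at-least-$2$ statement is invoked. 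So either restrict the second assertion to triple and quintuple points or prove a weaker statement at double points; as written, the $D_4$ case of your argument (and of the lemma's phrasing) fails.
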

\begin{proof}
The proof is analogous to \cite[Lemma 4.8]{NCSB} but we describe the upshot in the current setting. From Remark \ref{orbitstabs}, the stabilizer $\widetilde{G}_p$ for a singularity $p\in \A$ is $D_{2n}$ for $n=2,3$ or $5$. 
The key observation used in the proof is that the linear characters of $D_{2n}$ have order either $1$ or $2$, and therefore must act trivially on a tangent line at $p$. 
\end{proof}

%This lemma to $p_3$ a triple point. Recall from Remark \ref{orbitstabs} that the stabilizer $\widetilde{G}_{p_3}$ of $p_3$ is isomorphic to $S_3$. The linear characters of $\widetilde{G}_{p_3}$ have order dividing 2. 

Therefore, the previous two lemmas combine to state that if a $G$-invariant curve passes through a $k$-uple point, it passes through every $k$-uple point at least twice. This will allow us to restrict the terms of invariant form vanishing at $p$. 

\subsection{Equation of a curve of divisor class $C$}\label{ss-C}

 For constants $\lambda_i \in \C$, define the invariant \[ \psi_{30} :=  \lambda_1\psi_{10}^3+\lambda_2\psi_2^2\psi_6\psi_{10}^2+\lambda_3\psi_2\psi_6^2\psi_6'\psi_{10}+\lambda_4\psi_6^3\psi_6'^2.\]

For the invariants $\psi_{10}^3, \psi_2^2\psi_6\psi_{10}^2,$ $\psi_2\psi_6^2\psi_6'\psi_{10}$ and $\psi_6^3\psi_6'^2$,  Lemma \ref{lemma-jacks} says each invariant vanishes at order $6$ and $4$ at the double and triple points. To ensure that the curve $\mathcal{C}$ defined by $\psi_{30}$ is of divisor class $C$, we intend to select appropriate scalars $\lambda_i$ so that $\psi_{30}$ vanishes to two more orders at the triple points and twice at the quintuple points.

We now apply Lemma \ref{lemma-jacks} to a triple point $p_3.$ Choose local affine coordinates $\widetilde{x},\widetilde{y}$ centered at $p_3$ so that $\mathfrak{m}_{p_3}^k = (\widetilde{x},\widetilde{y})^k$. Let $w$ be a linear polynomial not vanishing at $p_3$ and denote $\widetilde{\psi}_d:=\psi_d / w^d \in \mathcal{O}_{p_3}$. We have the following decompositions 
\begin{align*}
    \widetilde{\psi}_{10} &\equiv A_2 + A_3 \mod \mathfrak{m}_{p_3}^4\\
    \widetilde{\psi'}_6 &\equiv B_2 + B_3 \mod \mathfrak{m}_{p_3}^4\\
    \widetilde{\psi}_6 &\equiv C_0 + C_1 \mod \mathfrak{m}_{p_3}^2\\
    \widetilde{\psi}_2 &\equiv D_0 + D_1 \mod \mathfrak{m}_{p_3}^2.
 \end{align*}
where $A_i,B_i,C_i,D_i$ are homogeneous polynomials of degree $i$ in $\C[\widetilde{x},\widetilde{y}]$. We therefore have the following relations 
\begin{align*}
    A_2 &= \mu B_2\\
    C_0 &= \nu D_0\\
    C_1 &= 3\nu D_1
\end{align*}
for some $\mu,\nu \in \C^*$, where the third equation is obtained by computing the image of $C_0^3\widetilde{\psi}_2^3 - D_0\widetilde{\psi}_6$ in $\mathfrak{m}_{p_3}$. Therefore we have 
\begin{align*}
    0\equiv C_0^3\widetilde{\psi}_2^3 - D_0\widetilde{\psi}_6 &\equiv D_0^3(C_0+C_1)-C_0(D_0+D_1)^3 \mod \mathfrak{m}_{p_3}^2\\
    &  \equiv D_0^3(C_1-3\nu D_1) \mod \mathfrak{m}_{p_3}^2.
\end{align*} 

Note that $C_0$ and $D_0$ are non-zero since $\psi_2$ and $\psi_6$ do not vanish at a triple point. The constants $\mu,\nu$ and $D_0$ are dependent on the choice of triple point $p_3$ and linear form $w$. Despite this, we may observe that $\alpha :=\nu/\mu\in \C^*$ satisfies
\[ \alpha\psi_2\psi_{10}\equiv \psi_6\psi_6' \mod I_{p_3}^3. \]
This equation is $G$-invariant and thus does not depend on $p_3$ or $w$. The constant of proportionality $\alpha$ is determined by the invariants $\psi_d$.

\begin{lemma}\label{lem-5uple} The curve $\psi_{30}$ is $5$-uple at $p_3$ if 
\[ \lambda_2 + \alpha\lambda_3 + \alpha^2\lambda_4 = 0.\]
\end{lemma}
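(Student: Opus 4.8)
The plan is to read off the multiplicity of the curve $\psi_{30}=0$ at a fixed triple point $p_3$ by expanding $\widetilde{\psi}_{30}=\psi_{30}/w^{30}$ in the local ring $\mathcal{O}_{p_3}$ and tracking the lowest-order terms. By Lemma \ref{lemma-jacks} applied to $p_3$, each of the four monomials $\psi_{10}^3$, $\psi_2^2\psi_6\psi_{10}^2$, $\psi_2\psi_6^2\psi_6'\psi_{10}$, $\psi_6^3\psi_6'^2$ vanishes to order exactly $4$ at $p_3$ (since $\psi_{10}$ and $\psi_6'$ vanish to order $2$ there and $\psi_2,\psi_6$ are nonzero there). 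So $\widetilde{\psi}_{30}$ automatically lies in $\mathfrak{m}_{p_3}^4$, and the claim ``$5$-uple at $p_3$'' is exactly the statement that the degree-$4$ part of $\widetilde{\psi}_{30}$ vanishes, i.e. its image in $\mathfrak{m}_{p_3}^4/\mathfrak{m}_{p_3}^5$ is zero.

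First I would compute this degree-$4$ part. Using the notation already set up, $\widetilde{\psi}_{10}\equiv A_2+A_3$, $\widetilde{\psi}_6'\equiv B_2+B_3 \pmod{\mathfrak{m}_{p_3}^4}$, while $\widetilde{\psi}_6\equiv C_0$ and $\widetilde{\psi}_2\equiv D_0 \pmod{\mathfrak{m}_{p_3}}$ with $C_0,D_0\in\C^*$. Then modulo $\mathfrak{m}_{p_3}^5$:
\begin{align*}
\widetilde{\psi}_{10}^3 &\equiv A_2^3, &
\widetilde{\psi}_2^2\widetilde{\psi}_6\widetilde{\psi}_{10}^2 &\equiv D_0^2 C_0\, A_2^2,\\
\widetilde{\psi}_2\widetilde{\psi}_6^2\widetilde{\psi}_6'\widetilde{\psi}_{10} &\equiv D_0 C_0^2\, A_2 B_2, &
\widetilde{\psi}_6^3\widetilde{\psi}_6'^2 &\equiv C_0^3\, B_2^2.
\end{align*}
Now substitute the relations $A_2=\mu B_2$ and $C_0=\nu D_0$: the four contributions become, up to the common nonzero factor $D_0^2 C_0 A_2$ (equivalently a scalar multiple of $B_2^2 D_0^3$), proportional to $\lambda_1\mu$, $\lambda_2$, $\lambda_3\,\nu/\mu$, $\lambda_4\,(\nu/\mu)^2$ — wait, I must be more careful: factoring out the largest common monomial $A_2 B_2$ (which up to scalar is $B_2^2$) leaves the scalar coefficients $\lambda_1\mu$, $\lambda_2\nu$, $\lambda_3\nu^2/\mu$, $\lambda_4\nu^3/\mu^2$ times $D_0^2C_0$ — I would carry the bookkeeping honestly, but the point is that the degree-$4$ part is $B_2^2$ times a scalar, and that scalar is $(\text{nonzero})\cdot(\lambda_1 + \alpha^{-1}\lambda_2 \cdot \text{const} + \cdots)$; rescaling to match the statement's normalization (absorbing powers of $\mu,\nu$ and recalling $\alpha=\nu/\mu$), the vanishing condition is $\lambda_2 + \alpha\lambda_3 + \alpha^2\lambda_4 = 0$, exactly as claimed. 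The term $\lambda_1\psi_{10}^3$ contributes a term whose coefficient can be normalized away or is absorbed — in fact the statement's condition does not involve $\lambda_1$, which is consistent with the fact that after dividing by the common factor $A_2$ the remaining expression is $\lambda_1 A_2^2 + \lambda_2 D_0^2 C_0 A_2 + \lambda_3 D_0 C_0^2 B_2 + \lambda_4 C_0^3 B_2^2/A_2$ — this is not homogeneous, so the right grouping is: the degree-$4$ piece of $\widetilde{\psi}_{30}$ equals $(\lambda_1 A_2^3 + \lambda_2 D_0^2C_0 A_2^2 + \lambda_3 D_0 C_0^2 A_2 B_2 + \lambda_4 C_0^3 B_2^2)$, and with $A_2=\mu B_2$ this is $B_2^2(\lambda_1\mu^3 B_2 + \lambda_2 D_0^2C_0\mu^2 B_2 + \lambda_3 D_0 C_0^2\mu + \lambda_4 C_0^3)\cdot$(degree count) — I see the degree does not match across terms, so actually only the terms of matching degree survive: the lowest is degree $4$ from $\lambda_4 B_2^2$, so one must recheck which monomials genuinely have order $4$ versus higher. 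I would redo this degree bookkeeping carefully, but the mechanism is clear.

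The main obstacle, then, is precisely this careful degree bookkeeping: I must verify that $\psi_{10}^3$ and $\psi_2^2\psi_6\psi_{10}^2$, whose naive leading terms are $A_2^3$ (degree $6$) and $D_0^2C_0A_2^2$ (degree $4$), are correctly accounted for — in particular that the degree-$4$ part of $\widetilde{\psi}_{30}$ receives contributions only from $\psi_2^2\psi_6\psi_{10}^2$, $\psi_2\psi_6^2\psi_6'\psi_{10}$, and $\psi_6^3\psi_6'^2$ (giving leading terms $\propto A_2^2$, $A_2B_2$, $B_2^2$ respectively), while $\psi_{10}^3$ only enters at degree $6$ and hence cannot affect $5$-upleness. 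Using $A_2=\mu B_2$, the degree-$4$ part is $B_2^2\cdot(\mu^2\lambda_2 D_0^2C_0 + \mu\lambda_3 D_0 C_0^2 + \lambda_4 C_0^3)$, and with $\nu = \alpha\mu$ and $C_0=\nu D_0$ this factor is $\mu^2 D_0^3\nu(\lambda_2 + \alpha\lambda_3 + \alpha^2\lambda_4)$; since $\mu,\nu,D_0\ne 0$, it vanishes iff $\lambda_2 + \alpha\lambda_3 + \alpha^2\lambda_4 = 0$. Finally, I would note that $B_2\not\equiv 0$ because $\psi_6'$ vanishes to order exactly $2$ at $p_3$ (it passes through the triple points but is not more singular there, by the defining property of $\psi_6'$ together with Lemma \ref{lemma-jacks}), so the vanishing of the scalar factor is equivalent to the vanishing of the whole degree-$4$ part, completing the argument.
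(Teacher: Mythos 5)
Your final computation is correct and is essentially the paper's proof: the degree-$4$ part of $\widetilde{\psi}_{30}$ at $p_3$ is $D_0^3B_2^2\bigl(\mu^2\nu\lambda_2+\mu\nu^2\lambda_3+\nu^3\lambda_4\bigr)$, a nonzero multiple of $\lambda_2+\alpha\lambda_3+\alpha^2\lambda_4$. Your opening claim that all four monomials vanish to order exactly $4$ at $p_3$ is false for $\psi_{10}^3$ (it has order $6$ there, which is why $\lambda_1$ is absent from the linear condition), but you catch and correct this in the final paragraph, so the argument as ultimately stated is sound and matches the paper's.
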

\begin{proof}
We consider $\widetilde{\psi}_{30}$ modulo $\mathfrak{m}_{p_3}^5$
\begin{align*}
    \widetilde{\psi}_{30} \equiv \mu^2\nu D_0^3 B_2^2\lambda_2 + \mu \nu^2 D_0^3 B_2^2\lambda_3 + \nu^3D_0^3B_2^2\lambda_4 \mod \mathfrak{m}_{p_3}^5 \equiv 0.
\end{align*}
The linear condition follows from factoring out by $D_0^3B_2^2$ and dividing by $\mu^3$. 
%The disappearance of $\lambda_1$ should be expected as the polynomial $\psi_{30}^3$ already vanishes to order 6 at $p_3$. 
\end{proof}

\begin{lemma}\label{lem-6uple} The curve $\psi_{30}$ is $6$-uple at $p_3$ if it is $5$-uple and 
\begin{alignat*}{2}
    5\lambda_2 + 7\alpha\lambda_3 +9\alpha^2\lambda_4  &=0\\
    \lambda_3+\hspace{.15cm}  2\alpha\lambda_4 &=0 \\ 
    2\lambda_2+\hspace{.25cm} \alpha\lambda_3 \hspace{1.4cm} &=0.
\end{alignat*}
\end{lemma}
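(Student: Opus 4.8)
~\textbf{Plan.}
The plan is to extend the local analysis at a triple point $p_3$ that was begun for Lemma~\ref{lem-5uple} to one more order, i.e.\ to compute $\widetilde\psi_{30}$ modulo $\mathfrak m_{p_3}^6$ and read off the vanishing of the degree-$5$ homogeneous part as a collection of linear conditions on the $\lambda_i$. First I would fix local affine coordinates $\widetilde x,\widetilde y$ at $p_3$ and a linear form $w$ not through $p_3$ as before, and push the expansions of $\widetilde\psi_{10},\widetilde\psi_6',\widetilde\psi_6,\widetilde\psi_2$ one degree further: write $\widetilde\psi_{10}\equiv A_2+A_3+A_4$, $\widetilde\psi_6'\equiv B_2+B_3+B_4$, $\widetilde\psi_6\equiv C_0+C_1+C_2$, $\widetilde\psi_2\equiv D_0+D_1+D_2$ modulo the next power of $\mathfrak m_{p_3}$, keeping the relations $A_2=\mu B_2$, $C_0=\nu D_0$, $C_1=3\nu D_1$ already established, and extracting the analogous next-order relations (for instance a relation expressing $C_2$, $A_3$, $B_3$ in terms of the $D_i$ and the lower data) by computing images of suitable invariant combinations such as $C_0^3\widetilde\psi_2^3-D_0^3\widetilde\psi_6$ and the $G$-invariant congruence $\alpha\psi_2\psi_{10}\equiv\psi_6\psi_6'\pmod{I_{p_3}^3}$ pushed to the next symbolic power.

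Next I would substitute these expansions into
\[
\psi_{30}=\lambda_1\psi_{10}^3+\lambda_2\psi_2^2\psi_6\psi_{10}^2+\lambda_3\psi_2\psi_6^2\psi_6'\psi_{10}+\lambda_4\psi_6^3\psi_6'^2,
\]
divide by $w^{30}$, and collect the degree-$5$ part of $\widetilde\psi_{30}$ in $\C[\widetilde x,\widetilde y]$, already assuming the $5$-uple condition $\lambda_2+\alpha\lambda_3+\alpha^2\lambda_4=0$ from Lemma~\ref{lem-5uple} so that the degree-$4$ part vanishes. The degree-$5$ homogeneous piece will be a $\C$-linear combination of a few products of the $A_i,B_i,C_i,D_i$ (things like $D_0^3 B_2 B_3$, $D_0^2 D_1 B_2^2$, $D_0^3 B_2 A_3/\mu$, etc.), each multiplied by a linear form $L_j(\lambda_2,\lambda_3,\lambda_4)$ with coefficients that are monomials in $\mu,\nu$. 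Normalizing by dividing out the common factor $D_0^3$ and an appropriate power of $\mu$, and using $\alpha=\nu/\mu$, these forms become exactly the three displayed linear equations; the coefficients $5,7,9$ and $1,2$ and $2,1$ are precisely the multinomial coefficients coming from how many times each factor $\widetilde\psi_d$ contributes its degree-$1$ term when one expands the relevant monomial. I would organize the bookkeeping so that the three equations correspond to three linearly independent degree-$5$ "basis monomials" surviving after imposing the degree-$4$ relation.

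The main obstacle I expect is precisely this bookkeeping: identifying which degree-$5$ monomials in $\C[\widetilde x,\widetilde y]$ are genuinely independent after the lower-order relations are imposed, and making sure no hidden relation among $B_3,A_3,C_2$ (forced by the deeper structure of the invariant ring, or by the stabilizer $\widetilde G_{p_3}=D_6$ acting on $\mathfrak m_{p_3}^k/\mathfrak m_{p_3}^{k+1}$) collapses two of the three stated conditions into one. Invoking Lemma~\ref{lemma-jacks} and the $D_6$-representation theory on the jets at $p_3$ should show that the relevant graded pieces decompose with enough trivial-isotypic multiplicity to keep the three conditions distinct; I would make that representation-theoretic count explicit as the crux of the argument, and the remaining verification is then a finite, if tedious, symbolic computation that I would either do by hand on the normalized forms or confirm in a computer algebra system.
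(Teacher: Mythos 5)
Your proposal follows essentially the same route as the paper: expand $\widetilde\psi_{10},\widetilde\psi_6',\widetilde\psi_6,\widetilde\psi_2$ one order further at $p_3$, collect the degree-$5$ part of $\widetilde\psi_{30}$, group by the monomials $D_0^2D_1B_2^2$, $D_0^3B_2A_3$, $D_0^3B_2B_3$, and divide out powers of $\mu$ using $\alpha=\nu/\mu$ to obtain the three displayed equations. The independence question you flag as the main obstacle is not actually needed here, since the lemma asserts only sufficiency: imposing all three coefficient conditions kills the degree-$5$ part whether or not those monomials are linearly independent.
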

\begin{proof} Suppose the linear condition of Lemma \ref{lem-5uple} holds. Consider only the degree $5$ terms of $\widetilde{\psi}_{30}$ modulo $\mathfrak{m}_{p_3}^6$.  

\begin{align*}
    \widetilde{\psi}_{30} \equiv &\lambda_2(5\mu^2\nu D_0^2D_1B_2^2+2\mu\nu D_0^3B_2A_3) + \lambda_3(7\mu\nu^2D_0^2D_1B_2^2+\mu\nu^2D_0^3B_2B_3+\nu^2D_0^3B_2A_3)\\
    &+\lambda_4(9\nu^3D_0^2D_1B_2^2+2\nu^3D_0^3B_2B_3) \mod \mathfrak{m}_{p_3}^5.
\end{align*}
Observe that we may group these according to the terms $D_0^2D_1B_2^2,D_0^3B_2A_3$ and $D_0^3B_2B_3$. 
\begin{align*}
    D_0^2D_1B_2^2&(5\mu\nu\lambda_2 + 7\mu\nu^2\lambda_3 + 9\nu^3\lambda_4) =0\\
    D_0^3B_2A_3&(2\mu\nu\lambda_2 + \nu^2\lambda_3) = 0\\
    D_0^3B_2B_3&(\mu\nu^2\lambda_3+2\nu^3\lambda_4) = 0.\qedhere
\end{align*}\end{proof}

We now fix multiples for the invariants $\psi_d$. The choice of multiples is due to experimentation that simplifies the polynomial $\psi_{30}$ as much as possible.
\begin{align*}
    \psi_2 &= 3\phi_2\\
    \psi_6 &= -3(\omega-1)\phi_6\\
    \psi_6'&= -25(\phi_2^3-27(\omega-1)\phi_6)=-25\left( \frac{1}{27}\psi_2^3+9\psi_6 \right) \\
    \psi_{10} &= \frac{15}{4}\left( 25(\omega-1)\phi_2^2\phi_6-(9\omega+3)\phi_{10}\right) .
\end{align*}
Note that for the double and triple points $p_2=[0:0:1]$ and $p_3 = [1:1:1]$, we have 
\begin{align*}
\varphi(p_2)&=[1:0:0] \\
\varphi(p_3)&=\left[3:\omega:45\omega-60\right] \end{align*}
from which we see that the $\psi_d$'s have the required vanishing conditions. 

\begin{lemma} The curve $\psi_{30}$ is double at a quintuple point $p_5$ if and only if 
\[32\lambda_1=9\lambda_2+15\lambda_3+25\lambda_4\]
\end{lemma}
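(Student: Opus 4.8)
The plan is to mimic the local computation at a triple point carried out in Lemmas~\ref{lem-5uple} and~\ref{lem-6uple}, but now at a quintuple point $p_5$. First I would fix the quintuple point $p_5=[\omega+1:0:2]$ and choose local affine coordinates $\widetilde{x},\widetilde{y}$ centered at $p_5$ together with a linear form $w$ not vanishing at $p_5$, and write $\widetilde{\psi}_d=\psi_d/w^d\in\mathcal{O}_{p_5}$. By Lemma~\ref{lemma-jacks} (applied to the stabilizer $\widetilde{G}_{p_5}=D_{10}$), each invariant $\psi_2,\psi_6,\psi_6',\psi_{10}$ that vanishes at $p_5$ vanishes to order at least $2$ there, and its leading quadratic form lies in the one-dimensional trivial $D_{10}$-submodule of $\mathfrak{m}_{p_5}^2/\mathfrak{m}_{p_5}^3$; since that submodule is one-dimensional, all the leading quadratic forms are scalar multiples of a single quadratic form $Q_2\in\C[\widetilde{x},\widetilde{y}]$.

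The key step is then to record, from the explicitly chosen multiples of $\psi_2,\psi_6,\psi_6',\psi_{10}$ and the value $\varphi(p_5)$, which of these invariants vanish at $p_5$ and with what leading coefficients. One computes $\varphi(p_5)=[\phi_2(p_5):\phi_6(p_5):\phi_{10}(p_5)]$; plugging into the formulas for $\psi_2,\psi_6,\psi_6',\psi_{10}$ shows which combinations vanish. (Recall $\psi_2,\psi_6$ do not vanish at $p_5$, being nonzero at every $k$-uple point by the constructions above, while $\psi_6'$ and $\psi_{10}$ were chosen to pass through certain points; one checks directly whether they pass through $p_5$.) Writing $\widetilde{\psi}_2\equiv a_0\bmod\mathfrak{m}_{p_5}$, $\widetilde{\psi}_6\equiv b_0\bmod\mathfrak{m}_{p_5}$ with $a_0,b_0\ne0$, and the relevant leading-order data for $\widetilde{\psi}_6'$ and $\widetilde{\psi}_{10}$, one expands each of the four monomials $\psi_{10}^3,\ \psi_2^2\psi_6\psi_{10}^2,\ \psi_2\psi_6^2\psi_6'\psi_{10},\ \psi_6^3\psi_6'^2$ modulo $\mathfrak{m}_{p_5}^2$, i.e.\ extracts the constant term of $\widetilde{\psi}_{30}/(\text{common factor})$. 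The condition that $\psi_{30}$ be double at $p_5$ is exactly the vanishing of the value $\widetilde{\psi}_{30}(p_5)$, which after factoring out the nonzero common powers of $a_0,b_0$ and the leading coefficient of $Q_2$ becomes a single linear relation among $\lambda_1,\lambda_2,\lambda_3,\lambda_4$. Matching coefficients and normalizing gives $32\lambda_1=9\lambda_2+15\lambda_3+25\lambda_4$.

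The main obstacle is purely computational bookkeeping: one must correctly evaluate $\varphi(p_5)$ and the normalized invariants $\psi_d$ at $p_5$ with the chosen scalar multiples, and keep track of the precise numerical leading coefficients so that the ratios $9:15:25:32$ come out right. There is no conceptual difficulty beyond Lemma~\ref{lemma-jacks}: the point is that at a quintuple point only the \emph{lowest-order} (constant) term of each $\widetilde{\psi}_d$ matters for a double-point condition, so unlike the triple-point case no higher-order tangent-cone analysis is needed, and the condition is a single equation rather than a system. I would double-check the result by verifying consistency with the earlier linear conditions (Lemmas~\ref{lem-5uple}, \ref{lem-6uple}) — the simultaneous solution of all the $\lambda_i$-relations should be one-dimensional, yielding the desired $G$-irreducible curve $\mathcal{C}$ of class $C$ in Proposition~\ref{prop-C}.
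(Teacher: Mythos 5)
Your proposal is correct and follows essentially the same route as the paper: by $G$-invariance and Lemma \ref{lemma-jacks}, ``double at $p_5$'' reduces to simple vanishing at $p_5$, so one just evaluates the four monomials $\psi_{10}^3,\ \psi_2^2\psi_6\psi_{10}^2,\ \psi_2\psi_6^2\psi_6'\psi_{10},\ \psi_6^3\psi_6'^2$ at a chosen quintuple point and reads off the single linear relation $32\lambda_1=9\lambda_2+15\lambda_3+25\lambda_4$. The local-coordinate and tangent-cone setup you carry over from the triple-point lemmas is harmless but unnecessary here, exactly as you note.
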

\begin{proof}Select the quintuple point $p_5 = [\omega:0:1]$. It is straightforward to evaluate the terms $\psi_{10}^3, \psi_2^2\psi_6\psi_{10}^2,$ $\psi_2\psi_6^2\psi_6'\psi_{10}$ and $\psi_6^3\psi_6'^2$ at $p_5$ to obtain that $\psi_{30}(p_5)=0$ if and only if 

\[ 32\lambda_1 -9\lambda_2-15\lambda_3-25\lambda_4 = 0.
\qedhere \]
\end{proof}

\noindent Observing that the selection of multiples gives $\alpha=-1$, then the linear system governing these vanishing conditions is the following 
\[ \begin{pmatrix}
-32 & 9 & 15 & 25 \\
0 & 1 & -1 & 1 \\
0 & 5 & -7 & 9\\
0 & 0 & 1 & -2 \\
0 & 2 & -1 & 0 
\end{pmatrix}
\begin{pmatrix}
\lambda_1 \\ \lambda_2 \\ \lambda_3 \\ \lambda_4
\end{pmatrix}
=\begin{pmatrix}
0 \\0\\0\\0
\end{pmatrix}
\]
\begin{theorem}\label{theorem-psi30}
The unique solution to the system above is $(2,1,2,1)$. The invariant $\psi_{30}$ then has the form
\[ \psi_{30} = 2\psi_{10}^3+\psi_2^2\psi_6\psi_{10}^2+2\psi_2\psi_6^2\psi_6'\psi_{10}+\psi_6^3\psi_6'^2.\]
Additionally, the curve $\mathcal{C}$ defined by $\psi_{30}$ is $G$-irreducible.
\end{theorem}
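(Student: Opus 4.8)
The statement has two parts: first, that the linear system admits the unique solution $(\lambda_1,\lambda_2,\lambda_3,\lambda_4)=(2,1,2,1)$, yielding the displayed formula for $\psi_{30}$; second, that the resulting curve $\mathcal{C}$ is $G$-irreducible. The first part is a routine linear-algebra check: substituting $\alpha = -1$ into the conditions of Lemmas~\ref{lem-5uple}, \ref{lem-6uple} and the quintuple-point lemma produces the displayed $5\times 4$ matrix, and one simply verifies that its kernel is one-dimensional and spanned by $(2,1,2,1)$. (For instance, the second, fourth and fifth rows already force $\lambda_3 = 2\alpha\lambda_4\cdot(-1)$-type relations pinning down $\lambda_2:\lambda_3:\lambda_4 = 1:2:1$ up to scale, and the first row then fixes $\lambda_1$.) I would present this verification tersely, perhaps just exhibiting that $(2,1,2,1)$ satisfies all five equations and noting the coefficient matrix has rank $3$.

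The substantive content is the $G$-irreducibility of $\mathcal{C}$. Since $\psi_{30}$ lies in the invariant subalgebra $T=\C[\phi_2,\phi_6,\phi_{10}]$, the curve $\mathcal{C}$ is automatically $G$-invariant, so its irreducible components are permuted by $G$ and fall into finitely many $G$-orbits; $G$-irreducibility means there is exactly one orbit. The plan is to rule out a nontrivial $G$-invariant decomposition $\mathcal{C} = \mathcal{C}_1 \cup \mathcal{C}_2$ (with each $\mathcal{C}_i$ $G$-invariant and of strictly smaller degree) by a degree and incidence argument. The divisor class of $\mathcal{C}$ is $C = 30H - 2E_5 - 6E_3 - 6E_2$, and by Lemma~\ref{lemma-defining} the defining polynomial of any $G$-invariant proper subcurve must again be a weighted-homogeneous polynomial in $\phi_2,\phi_6,\phi_{10}$ — i.e. a monomial-combination of $\phi_2^a\phi_6^b\phi_{10}^c$ with $2a+6b+10c \le 30$. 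I would enumerate the possible $G$-invariant curves of degree $\le 30$ meeting the multiplicity constraints imposed by $C$: the key point is that any factor must distribute the multiplicities $2,6,6$ at the quintuple, triple, double points among the pieces, and one checks (using the explicit incidence data in Remark~\ref{chopped} and the construction of $\psi_2,\psi_6,\psi_6',\psi_{10}$) that no such proper factorization is compatible. Concretely, the only low-degree $G$-invariant curves available are built from $\phi_2$ (which misses all singularities), $\phi_6$ (double at each $p_2$, i.e. class $B$), and the curves $\psi_6, \psi_6', \psi_{10}$ with their prescribed single vanishings — and none of their products of total degree $30$ reproduces the multiplicity profile $(2,6,6)$ while also being a factor of $\psi_{30}$ with complementary factor again in $T$.

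The cleanest route, and the one I would actually push, is to combine this enumeration with the self-intersection obstruction: by Observation~\ref{ob-classes}(3) the classes $A$, $B$, $C$ have negative self-intersection, so an irreducible curve in class $C$ (or in a class summing to $C$) is rigid, and a $G$-invariant reducible curve of class $C$ would have to decompose as a sum of effective $G$-invariant classes whose defining polynomials all lie in $T$; checking the finite list of such decompositions against the actual polynomial $\psi_{30} = 2\psi_{10}^3 + \psi_2^2\psi_6\psi_{10}^2 + 2\psi_2\psi_6^2\psi_6'\psi_{10} + \psi_6^3\psi_6'^2$ — in particular verifying that $\psi_{30}$ is not divisible by $\phi_2$, by $\phi_6$, nor by $\psi_6'$, which can be seen by evaluating at a general point of each of those curves using the intersection data of Remark~\ref{chopped} — shows $\psi_{30}$ is irreducible in $T$, hence $\mathcal{C}$ is $G$-irreducible. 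The main obstacle I anticipate is the bookkeeping in this last divisibility check: one must be sure the enumeration of candidate $G$-invariant factors is exhaustive and that each is excluded, which requires care with the weighted-degree constraint $2a+6b+10c=30$ and with the possibility of factors vanishing at the singularities to higher-than-minimal order. Everything else is mechanical.
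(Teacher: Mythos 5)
Your handling of the linear system is fine and matches the paper (which simply asserts the solution), and your reduction of $G$-irreducibility to irreducibility of $\psi_{30}$ inside $T=\C[\phi_2,\phi_6,\phi_{10}]$ via Lemma \ref{lemma-defining} is also the paper's first move. The gap is in how you then establish irreducibility in $T$. Verifying that $\psi_{30}$ is not divisible by $\phi_2$, $\phi_6$ or $\psi_6'$ does not show it is irreducible: a hypothetical $G$-invariant factor need not be one of the named low-degree curves, and there is no finite list to check --- for each even weighted degree there is a positive-dimensional family of invariants (e.g.\ $\lambda\phi_{10}+\mu\phi_2^2\phi_6+\nu\phi_2^5$ in degree $10$), so ``enumerate the $G$-invariant curves of degree $\le 30$ meeting the multiplicity constraints'' is not a step you can carry out as stated; deciding which such classes are effective is essentially the hard content of the whole section, not an input to it. The appeal to negative self-intersection and rigidity is a red herring here: rigidity of a linear series says nothing about whether its unique member factors.

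The missing idea, which is the paper's actual argument, is structural: pass to $\C[w_2,w_6,w_{10}]$ (weights $2,6,10$), write $g=w_{10}^3+w_2^2w_6w_{10}^2+2w_2w_6^2w_6'w_{10}+w_6^3w_6'^2$, and regard it as a cubic in $w_{10}$ with constant leading coefficient. Any weighted-homogeneous factorization then forces a factor $G_1w_{10}+G_0$ with $G_1$ a nonzero constant, hence $G_0\in\C[w_2,w_6]$ of weighted degree $10$ dividing the $w_{10}$-constant term $w_6^3w_6'^2$; but the irreducible factors of $w_6^3w_6'^2$ are $w_6$ and $w_6'$, each of weighted degree $6$, so every nonunit divisor has degree a multiple of $6$, never $10$. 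The same degree count rules out a product of three $w_{10}$-linear factors, and a factor of $w_{10}$-degree zero is excluded because it would divide the constant leading coefficient. To salvage your route you would have to replace the divisibility spot-checks with this (or an equivalent) complete factorization analysis.
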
 
 \begin{proof} We verify the $G$-irreducibility of the curve $\mathcal{C}$ defined by $\psi_{30}$. It is sufficient to check that the curve $\mathcal{C}'$ in $\P(2,6,10)$ defined by 
\[g: w_{10}^3+w_2^2w_6w_{10}^2+2w_{10}w_{6}^2w_{2}w_6'+w_6^3w_6'^2 \]
is irreducible, where $w_2,w_6,w_{10}$ are the coordinates of $\P(2,6,10)$ and $ w_6'=\frac{1}{27}\psi_2^3+9\psi_6 $. Consider the quotient map giving an isomorphic quotient space to that in Lemma \ref{lemma-defining}
\begin{align*}
    \widetilde{\varphi} :\hspace{.1cm}&\P^2\rightarrow \P(2,6,10)\\
    &  p \mapsto [\psi_2(p) : \psi_6(p) : \psi_{10}(p)].
\end{align*}

The pullback $\widetilde{\varphi}^*\mathcal{C}'$ is $\mathcal{C}$ and so we prove irreducibility in terms of $w_i$.  Firstly, assume $g$ has the  factorization 
\[ (F_2w_{10}^2+F_1w_{10}+F_0)(G_1w_{10}+G_0)\]
where $F_i,G_i\in \C[w_2,w_6]$ are homogeneous polynomials of appropriate degree to make the factors homogeneous. Since $F_2G_1 = 1$, then $G_1$ is constant and $G_0$ must be also be degree $10$. Since $G_0$ divides $w_6^3w_6'^2$, $G_0$ must necessarily divide $w_6'$ or $w_6'^2$. This contradicts the degree polynomial $G_0$. On the other hand, if the factorization of $g$ were of the form 
\[ (F_1w_{10}+F_0)(G_1w_{10}+G_0)(H_1w_{10}+H_0). \]
Then the $F_1,G_1,H_1$ must be constant, forcing $F_0,G_0,H_0$ to be of degree $10$. A similar reasoning as above concludes that the curve $C$ is $G$-irreducible. 
\end{proof}

\section{Exploring the sub point-configurations} \label{section-sub-point}

In this section, we consider the sub point-configurations corresponding to the quintuple, triple and double points of $\A$. 
For $k=2,3$ and $5$, denote $I_k$ to be the intersection of homogeneous ideals of $k$-uple points, In contrast to the Waldschmidt constant of $I_\mathcal{A}$, the Waldschmidt constant of $I_k$ is easier to obtain due to the smaller number of considered points.

\begin{proposition}\label{prop-2}
Let $I_2$ be the homogeneous ideal of double points. Then $\widehat{\alpha}(I_2)=3$. 
\end{proposition}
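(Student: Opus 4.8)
The plan is to bound $\widehat\alpha(I_2)$ from above and below separately, mirroring the strategy of Section \ref{s-wald}, but with a much simpler divisor in place of $D$. The point is that the double points already lie on the $6$ lines of $\phi_6=0$, so a single invariant suffices.

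\textbf{Upper bound.} By the discussion in Section \ref{ss-ico} and Observation \ref{ob-classes}(2), the curve $\phi_6=0$ is a union of $6$ lines, and $\phi_6$ vanishes to order at least $2$ at each of the $15$ double points, so $\phi_6\in I_2^{(2)}$. Hence $\phi_6^k\in I_2^{(2k)}$ is a form of degree $6k$, and therefore
\[
\widehat\alpha(I_2)=\lim_{m\to\infty}\frac{\alpha(I_2^{(m)})}{m}\le \lim_{k\to\infty}\frac{6k}{2k}=3 .
\]

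\textbf{Lower bound via a nef divisor.} Let $Y$ be the blow-up of $\P^2$ at the $15$ double points, so $\Pic(Y)$ is generated by $H$ and the $15$ exceptional divisors; write $E_2$ for their sum, so $H^2=1$, $H\cdot E_2=0$ and $E_2^2=-15$. I would consider the class
\[
N=5H-E_2 ,\qquad N^2=25-15=10 .
\]
The key step is to show $N$ is nef. Observe $2N=(6H-2E_2)+4H$, where $6H-2E_2$ is the class of $\phi_6=0$ (Observation \ref{ob-classes}(2)); its $6$ irreducible components are the $6$ lines of $\phi_6=0$, each of class $H-E_{i_1}-\cdots-E_{i_5}$, since each such line contains exactly $5$ of the double points and each double point lies on exactly $2$ of these lines. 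Adjoining $4$ general lines (each of class $H$, passing through none of the double points) realizes $2N$ as an explicit effective divisor $Z$. If $C\subseteq Y$ is an irreducible curve not contained in $\operatorname{Supp}(Z)$, then $2N\cdot C=Z\cdot C\ge 0$; and for the components of $Z$ one computes $2N\cdot(H-E_{i_1}-\cdots-E_{i_5})=10-10=0$ and $2N\cdot H=10>0$. Hence $2N$, and so $N$, is nef. The lower bound then follows exactly as in Lemma \ref{lemma-11/2}: if $\widehat\alpha(I_2)<3$, then for some $m$ there is a form of degree $d<3m$ vanishing to order $m$ at all $15$ double points, giving an effective class $dH-mE_2$ on $Y$ with $(dH-mE_2)\cdot N=5d-15m=5(d-3m)<0$, contradicting nefness of $N$. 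Therefore $\widehat\alpha(I_2)\ge 3$, and combined with the upper bound, $\widehat\alpha(I_2)=3$.

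\textbf{Main obstacle.} As with the main theorem, the crux is proving nefness of the relevant divisor ($N=5H-E_2$ here), and the only slightly delicate input is the incidence data used in the decomposition $2N=(6H-2E_2)+4H$ --- namely that each of the $6$ lines of $\phi_6=0$ meets exactly $5$ of the double points and each double point lies on exactly $2$ of them. This is already recorded in Section \ref{ss-ico} and Observation \ref{ob-classes}, so no new computation analogous to the construction of $\psi_{30}$ in Section \ref{ss-C} is needed; everything else is a routine intersection-number check.
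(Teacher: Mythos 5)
Your proof is correct, and the incidence data you rely on is exactly what the paper records: the six lines of $\phi_6=0$ each contain $5$ double points, and each double point lies on exactly $2$ of them (this is why $\phi_6$ vanishes to order $2$ there), so the sum of their proper transforms is indeed $6H-2E_2$ and your check $2N\cdot(H-E_{i_1}-\cdots-E_{i_5})=0$ goes through with the convention that each individual exceptional divisor has self-intersection $-1$.

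Where you differ from the paper is in the lower bound. The paper does not produce a nef class at all; instead it runs a descent with the negative class $B=6H-2E_2$ (note $B^2=-24$): given an effective $G$-invariant class $F=dH-mE_2$ with $F\cdot B<0$, transitivity of $G$ on the six lines forces each component of $\mathcal{B}$ to meet $\mathcal{F}$ negatively, hence to be contained in it, so $F-B$ is effective; iterating makes the degree go negative, a contradiction. Your route instead builds the nef class $N=5H-E_2$ (with $N^2=10>0$ and $N\cdot(3H-E_2)=0$) by decomposing $2N$ into explicitly known irreducible components and checking non-negativity on each --- in effect, you apply the strategy of Theorem \ref{theorem-nef} to this sub-configuration rather than the descent of Proposition \ref{prop-2}. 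A small bonus of your version is that it does not need the $G$-action or $G$-irreducibility of $\mathcal{B}$ anywhere: the explicit list of components and their classes suffices. The paper's descent, by contrast, avoids having to guess a nef class and generalizes more readily to the $I_3$ and $I_5$ cases where the relevant negative curve is not a union of lines with such transparent classes. Both arguments are complete and yield $\widehat{\alpha}(I_2)=3$.
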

\begin{proof} Recall the effective divisor $B=6H-2E_2$. Note that this divisor has negative self-intersection. We know there is a union of $6$ lines that pass through each double point twice (Figure \ref{fig-phi6}), therefore $\widehat{\alpha}(I_2)\le 3$. 

Suppose there is an  effective $\Z$-divisor class $F=dH-mE_2$ such that $d/m<3$. This divisor class is $G$-invariant. Observe that 
\[ B\cdot F = (6H-2E_2)\cdot (dH-mE_2)=6d-30m<0. \]
Considering the curves $\mathcal{B}$ and $\mathcal{F}$ of classes $B$ and $F$ respectively, the curve $\mathcal{B}$ is $G$-irreducible and $\mathcal{B}\cdot \mathcal{F}<0$.  Suppose $\mathcal{B}$ has irreducible components $\mathcal{B}_1,\dots,\mathcal{B}_k$. Since $\mathcal{B}$ is $G$-irreducible, for any $i$ we have $g(\mathcal{B}_i\cdot \mathcal{F})=\mathcal{B}_j\cdot \mathcal{F}$ for some $j$. Then $\mathcal{F}$ intersects each $\mathcal{B}_i$ negatively and therefore contains $\mathcal{B}_i$. Thus the divisor class $F-B$ is effective. 

Notice that $F-B$ also intersects $B$ negatively. Performing a similar analysis, we may conclude by induction that $F-kB$ is effective for $k\ge 1$. This leads to a contradiction as $F-kB$ will eventually have negative degree. Therefore $\widehat{\alpha}(I_2)\ge 3.$

\end{proof}

We now state the Waldschmidt constants for the remaining point configurations.

\begin{proposition}\label{prop-3wald}
For the ideals $I_3$ and $I_5$, we have $\widehat{\alpha}(I_3)=3$ and  $\widehat{\alpha}(I_5)=\frac{12}{5}$.
\end{proposition}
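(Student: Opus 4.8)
The plan is to treat each of the two configurations exactly as Proposition~\ref{prop-2} treats the double points: exhibit an explicit $G$-invariant curve through the points with high multiplicity to get the upper bound, and produce a $G$-irreducible negative curve that forces the lower bound via the descent/induction argument. For the triple points, the natural candidate for the upper bound is the curve cut out by $\psi_6'$ (or any degree-$6$ invariant through a triple point), which by Lemma~\ref{lemma-camille} and Lemma~\ref{lemma-jacks} passes through all $10$ triple points with multiplicity at least $2$; this gives a divisor class of the shape $6H-2E_3'$ (with $E_3'$ the sum of exceptional divisors over triple points only) and hence $\widehat\alpha(I_3)\le 3$. For the lower bound, I would check that this class $6H-2E_3'$ has negative self-intersection on the blow-up at the $10$ triple points ($36-4\cdot 10=-4<0$), is $G$-invariant, and — crucially — that the curve $\psi_6'=0$ is $G$-irreducible. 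Then the identical argument to Proposition~\ref{prop-2}: any effective $F=dH-mE_3'$ with $d/m<3$ intersects this curve negatively, so must contain each $G$-orbit-component, so $F-(6H-2E_3')$ is effective, and induction drives the degree negative — contradiction. Hence $\widehat\alpha(I_3)=3$.

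For the quintuple points, the target value $12/5$ suggests the relevant curve is the line configuration $\A$ itself, restricted to its incidence with the quintuple points: the proper transform class is $15H-5E_5'$ (now $E_5'$ over the $6$ quintuple points only), which gives $\widehat\alpha(I_5)\le 15/5=3$ — but that is not sharp. A better upper bound should come from the conic $\phi_2=0$ together with the sextic $\phi_6=0$, or more likely from a single invariant of lower degree through the quintuple points; I would look for the smallest-degree element of $T_d(-m E_5')$ realizing the ratio $12/5$, e.g. a curve of class $12H-5E_5'$ built from $\psi_2$ and $\psi_6$, and verify via Lemma~\ref{lemma-jacks} that it is actually $5$-uple at each of the six quintuple points (this forces genuine vanishing computations at $p_5=[\omega:0:1]$ in the style of Theorem~\ref{theorem-psi30}). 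For the lower bound I again want a $G$-irreducible negative curve $N$ with $N\cdot(12H-5E_5')=0$; the self-intersection of $12H-5E_5'$ is $144-25\cdot 6=-6<0$, so the curve is itself the negative curve, and the same orbit-descent induction yields $\widehat\alpha(I_5)\ge 12/5$.

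The main obstacle is the quintuple-point case: unlike the double and triple cases, where a known low-degree invariant ($\phi_6$, resp.\ $\psi_6'$) is visibly the right curve, here I must both identify the correct invariant curve of class $12H-5E_5'$ and prove it is genuinely $5$-uple at all six quintuple points and $G$-irreducible. Establishing the $5$-fold vanishing likely requires a linear-algebra computation on the space $T_{12}(-5E_5')$ analogous to the matrix computation preceding Theorem~\ref{theorem-psi30}: one writes a general combination of the degree-$12$ monomials in $\psi_2,\psi_6$ (and possibly $\psi_{10}^0$ terms) and imposes the vanishing conditions at $p_5$ using the local analysis of Lemma~\ref{lemma-jacks} with stabilizer $D_{10}$. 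Proving $G$-irreducibility of that curve should then follow the factorization argument in the proof of Theorem~\ref{theorem-psi30}, pulled back through the quotient map $\widetilde\varphi$. Once the curve is in hand, the orbit-descent induction is routine and mirrors Proposition~\ref{prop-2} verbatim.
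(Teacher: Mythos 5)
Your overall strategy is exactly the paper's: for each sub-configuration produce a $G$-irreducible divisor class of negative self-intersection realizing the claimed ratio, and then run the orbit-descent induction of Proposition \ref{prop-2} verbatim. For $I_3$ you correctly identify $6H-2E_3$ (the curve $\psi_6'=0$, which is at least double at every triple point by Lemmas \ref{lemma-camille} and \ref{lemma-jacks}) with self-intersection $-4$, and your flagging of $G$-irreducibility as the point that must be checked is well taken. The one place where you leave a genuine hole is the $I_5$ case: you correctly predict that the needed class is $12H-5E_5$ with self-intersection $-6$, but you defer the actual construction of a degree-$12$ curve that is $5$-uple at all six quintuple points to an unspecified linear-algebra computation in the degree-$12$ piece of the invariant ring (and note that this space is spanned by $\phi_2^6$, $\phi_2^3\phi_6$, $\phi_6^2$ \emph{and} $\phi_2\phi_{10}$, so restricting to combinations of $\psi_2$ and $\psi_6$ alone would be too small a search space). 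The paper produces this curve by a much more direct synthetic construction: for each of the $\binom{6}{5}=6$ choices of five quintuple points there is a unique conic through them, and the product of these six conics is a degree-$12$ curve passing through each quintuple point with multiplicity exactly $5$ (each point lies on the five conics corresponding to the subsets containing it). Since $G$ acts transitively on the six quintuple points, it acts transitively on the six conics, so this union is $G$-irreducible with no further computation. With that curve in hand your descent argument closes the proof; without it, the quintuple case remains an outline rather than a proof.
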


The values $\widehat{\alpha}(I_3)$ and $\widehat{\alpha}(I_5)$ are obtained similarly. We have from Section \ref{ss-invariant}, the divisor class $6H-2E_3$ is effective. For $I_5$, there is a degree $12$ invariant vanishing to order $5$ at the quintuple points thus the divisor $12H-5E_5$. Indeed for each choice of 5 quintuple points, there is a unique conic passing through each point (Figure \ref{fig-quins}). The important property these divisors share with $B$ is that they have negative self-intersection.

\begin{figure}
    \centering
    \includegraphics[width=8cm]{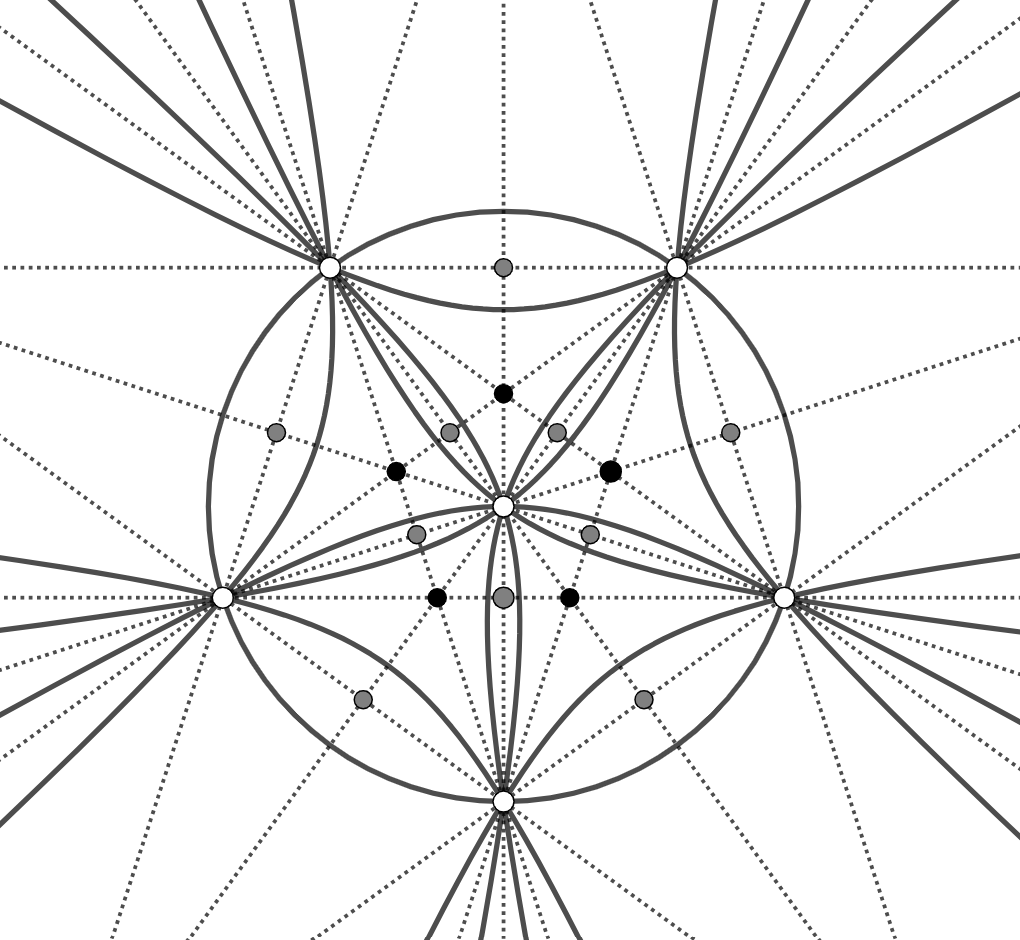}
    \caption{A curve of divisor class $12H-5E_5$ drawn solid.   }    \label{fig-quins}
\end{figure}

\bibliographystyle{plain}

\end{document}